\numberwithin{equation}{section}
\newtheorem{thm}{Theorem}[section]
\newtheorem{prop}[thm]{Proposition}
\newtheorem{cor}[thm]{Corollary}
\theoremstyle{remark}
\theoremstyle{definition}
\newcommand{\Gnorm}[1]{\left\lVert#1\right\rVert}
\newcommand{\gbinom}[2]{\genfrac{[}{]}{0pt}{}{#1}{#2}}
\newtheorem{rem}[thm]{Remark}
\DeclareMathOperator*{\argmax}{argmax}
\begin{document}
 \title[]{Approximating the first passage time density from data using generalized Laguerre polynomials}
\author[E. Di Nardo]{Elvira Di Nardo$^{\ast}$}
\address{$^{\ast}$ Dipartimento di Matematica \lq\lq G. Peano\rq\rq, Università degli Studi di Torino, Via Carlo Alberto 10, 10123 Torino, Italy}
\email{elvira.dinardo@unito.it }
\author[G. D'Onofrio]{Giuseppe D'Onofrio$^{\dagger}$}
\address{$^{\dagger}$ Dipartimento di Matematica \lq\lq G. Peano\rq\rq, Università degli Studi di Torino, Via Carlo Alberto 10, 10123 Torino, Italy}
\email{giuseppe.donofrio@unito.it}

\author[T. Martini]{Tommaso Martini$^{\ast\ast}$}
\address{$^{\ast\ast}$ Dipartimento di Matematica \lq\lq G. Peano\rq\rq, Università degli Studi di Torino, Via Carlo Alberto 10, 10123 Torino, Italy}
\email{tommaso.martini@unito.it }        

         \pagestyle{myheadings}  \maketitle

\begin{abstract}
This paper analyzes a method to approximate the first passage time probability density function which turns to be particularly useful if only sample data are available. The method relies on a Laguerre-Gamma polynomial approximation and iteratively looks for the best degree of the polynomial such that the fitting function is a probability density function. The proposed iterative algorithm relies on simple and new recursion formulae involving first passage time moments. These moments can be computed recursively from cumulants, if they are known. In such a case, the approximated density can be used also for the maximum likelihood estimates of the parameters of the underlying stochastic process.  If cumulants are not known, suitable unbiased estimators relying on $\kappa$-statistics are employed.
To check the feasibility of the method both in fitting the density and in estimating the parameters, the first passage time problem of a geometric Brownian motion is considered.  
\end{abstract}
\noindent {\small {\bf keywords: }{stochastic process, cumulant, geometric Brownian motion, $\kappa$-statistic, recursive algorithm}} \\
{\small {\bf 2020 MSC:} 65C20, 60G07, 62E17, 62M05}

\section{\label{sec:level1}
Introduction }

The first-passage-time (FPT) problem arises in
many applications in which a stochastic process $Y(t)$ starting in $y_{\tau}$ at time $\tau$ evolves in the presence of a  threshold $S(t).$ They range from finance to engineering including, among others, computational neuroscience, mathematical biology and reliability theory, see   \cite{redner} for a comprehensive collection of results.
The mathematical study of the FPT problem consists in finding the probability density function (pdf) 
$g[S(t),t|y_{\tau},\tau]=\frac{d}{d t}\mathbb{P}\{T<t\}$ of the random variable $T$, defined by
\begin{equation}\label{12}
T=\begin{cases} \inf_{t\geq \tau}\{Y(t)>S(t)\},& Y(\tau)=y_{\tau}<S(\tau),\\
\inf_{t\geq \tau}\{Y(t)<S(t)\},& Y(\tau)=y_{\tau}>S(\tau).\\
\end{cases}
\end{equation}
There are several strategies to approach this problem,
whose effectiveness depends on the properties of the stochastic process involved (see \cite{MR1718867} for an extensive review). 
They range from 
the Doob's representation formula \cite{doob1949}
to the Siegert's equation \cite{Siegert1951} that consists in a partial differential equation involving either the moments of $T$ or its Laplace transform. 
Closed-form expressions emerge only in a few cases \cite{buonocore2015closed} but numerical evaluation of $g$ can be provided as solution of non-singular second kind Volterra integral equation 
\cite{buo_integral, ricciardi_sacerdote_sato_1984, giorno_nobile_ricciardi_sato_1989, torres, dinardo_nobile_pirozzi_ricciardi_2001} or using Sturm-Liouville eigenfunction expansion series
\cite{Linetsky, alili_patie, Kent1980}. Due to the difficulty of the problem, asymptotic expressions of $g$ are studied using the Volterra integral equation \cite{giorno1990, nobile_ricciardi_sacerdote_1986}, Laplace transform techniques \cite{Martin_2019}
or Large Deviation estimates \cite{baldi_car_rossi, macci}.

All these techniques rely on the knowledge of the nature of the stochastic process $Y(t).$ But if a random sample of FPTs is analyzed without any prior information on the stochastic dynamics generating the data, the identification of a model could be difficult to implement. In general classical tools as histograms or kernel density estimators are the first choices to make a guess on the shape of the FPT pdf and then postulate a model.

Following the latter approach, the aim of this paper is to propose a Laguerre-Gamma polynomial approximation to fit a suitable pdf on a random sample of FPTs. This proposal moves from some recent approaches \cite{Ramos, MR4159245, mathematics2021} that use cumulants to obtain FPT pdf and related statistics.

Recall that if $T$ has moment generating function  $\mathbb E[e^{zT}] < \infty$ for all $z$ in an open interval about $0,$ then its cumulants $\{c_k(T)\}_{k \geq 1}$ are such that 
\begin{equation*}
\sum_{k \geq 1} c_k(T) 
\frac{z^k}{k!}  = \log \mathbb E[e^{zT}] 
\label{defcum}
\end{equation*}
for all $z$ in some (possibly smaller) open interval about $0.$ It is possible to recover cumulants up to order $k$ from
moments up to the same order (and viceversa), using the general partition polynomials \cite{Charalambides}. Therefore, from a theoretical point of view, there is a duality between these two numerical sequences, even if 
 the expressions of cumulants are most of the time simpler than those of moments. Moreover, 
cumulants have nice properties compared with moments such as the semi-invariance and the additivity. Overdispersion and underdispersion as well as asymmetry and tailedness of the FPT pdf might be analized through the first few cumulants \cite{McCullagh}.

The approximation proposed in this paper has a twofold
advantage. If the FPT moments/cumulants are not known, the special feature of this approach is the chance to recover an approximation of
the FPT pdf starting from a sample of FPT data like the classical
density estimators. Indeed estimates carried out from $\kappa$-statistics can replace the occurrences of cumulants in the polynomial approximation. Let us recall that the $k$-th $\kappa$-statistic is a symmetric function of the random sample whose expectation gives the $k$-th cumulant
$c_k(T).$ These estimators have minimum variance when compared to all other unbiased estimators and are built by free-distribution methods without using sample moments
\cite{Kendall}.
The method turns to be useful also if the model is known but the knowledge of the FPT moments is limited, as usually happens. In such a case, the approximation might be carried out  by simulating  the trajectories of the process  through a suitable  Monte Carlo method and using $\kappa$-statistics in place of cumulants. If the FPT moments/ cumulants are known or can be recovered from the Laplace transform of the FPT random variable $T$, the method is essentially a way to find an approximated analytical expression of $g.$ The approach works for a wide class of one dimensional stochastic processes and,  of course, is intended for the cases in which the closed form expression of $g$ is not available. Differently from other methods, 
the approximating function results to be a pdf whatever order of approximation is reached. Therefore it is possible to implement a maximum likelihood procedure to carry out estimates of the parameters involved in the model.

To check the feasibility of our proposal we compare the approximated expressions (obtained analytically or  estimated through $\kappa$-statistics) with a case in which the FPT pdf is known. In particular we use the closed analytical expression of the pdf of a Geometric Brownian motion (GBM) FPT through a constant boundary $S$. 

The GBM is a special case from the point of view of the FPT problem.
In this case the Laplace transform of $g$ is known and can be inverted, obtaining a closed form expression for the FPT density.
Moreover one can use the Doob's transform since the GBM can be generated from a Brownian motion with drift (case for which the pdf of $T$ is known), it has stationary distribution and its parameters solve the Siegert's equations. Moreover a Sturm-Liouville eigenvalue expansion for its infinitesimal generator exists \cite{byczkowski2006GBM}, its $g$ can be found directly from the Volterra integral equation \cite{MR1464598}. 
To test the performance of the approximated analytical expression here proposed, the choice of the GBM allows us to overcome the numerical difficulties that can arise performing simulations of the FPT, when exact method are not implemented \cite{herrmann2020exact}. In fact, in this case, the proposed polynomial approximation can be compared directly with a closed-form expression.

The paper is organized as follows.
In Section \ref{section2} we resume the FPT problem for the GBM recalling the known results useful for carrying out the approximation including some  new expressions for moments and cumulants relied on exponential Bell polynomial.
In Section \ref{section3} we discuss sufficient and necessary conditions to recover the infinite series expansion of the GBM FPT pdf in terms of the generalized Laguerre polynomials  and study the convergence of the method when this series is truncated at the order $n$.
In particular we investigate the chance to consider reference pdfs different from the usual Gamma density. 
Section \ref {section4} collects results and tools for an efficient iterative search of the best degree of approximation $n$. 
Numerical results on Laguerre-Gamma approximation are given in Section \ref{section5}. In particular we estimate simultaneously two parameters of the GBM using  the maximum likelihood estimation starting from first passage time data.
The method relies on the Laguerre-Gamma approximated FPT pdf and can be used even if $g$ is unknown. In this case it relies on $k$-statistics.

\section{The FPT problem for the Geometric Brownian Motion} \label{section2}
The GBM  is a regular diffusion process on $(0,+\infty)$.
It can be obtained as a transformation of a Brownian motion and for this reason it is also called exponential Brownian motion \cite{yor}. In fact, if 
$X(t)=(\mu-\frac{\sigma^2}{2})t+\sigma\; W(t)$ is a drifted Brownian motion, then the stochastic process
\begin{equation}\label{22}
Y(t)=y_0 \;e^{(\mu-\frac{\sigma^2}{2})t+\sigma W(t)}=y_0 \;e^{X(t)} \qquad \hbox{ with $\mu>\frac{\sigma^2}{2}$}
\end{equation}
is said GBM with starting point $y_0$ and
with infinitesimal mean and variance $(\mu-\frac{\sigma^2}{2})y$ and $\sigma^2y^2$, respectively \cite{karlin1981second}.

The transition pdf is known to be a lognormal pdf with parameters $\mu-\frac{\sigma^2}{2}$ and $\sigma\sqrt{t}$,
obtained by solving the Fokker-Planck equation, which in the case of the GBM reduces to the canonical form of the heat equation.
The transition pdf  presents exponential decay in the variable $t$ so the Laplace transform exists for every $\lambda >0$.

Let us assume that the threshold $S$ is constant
and $y_0 < S.$ In this case the Laplace transform $g_{\lambda}(S|y_0)$ of $g$ is given by \cite{MR1912205}
$$
	g_{\lambda}(S|y_0)=\bigg(\frac{y_0}{S}\bigg)^{k(\lambda)}  \quad \text{with } \quad 
k(\lambda)=	\frac{1}{\sigma^2}\left[ \sqrt{\bigg(\mu-\frac{\sigma^2}{2}\bigg)^2 + 2 \sigma^2 \lambda} +\bigg(\frac{\sigma^2}{2} -\mu\bigg)\right]$$
which can be rewritten as
\begin{equation}
 \label{28bis}
	g_{\lambda}(S|y_0)= \exp\left\{ k(\lambda) \ln \bigg(\frac{y_0}{S}
	\bigg)\right\} = 
	\exp\left\{\frac{a}{b}
	\bigg( 1 - \sqrt{1+\frac{2 b^2}{a} \lambda}\bigg)\right\}
\end{equation}
with 
\begin{equation}
a = \frac{(\ln S - \ln y_0)^2}{\sigma^2} > 0 \quad \text{and } \quad 
b = \frac{\ln S - \ln y_0}{\mu - \frac{\sigma^2}{2}} > 0.
\end{equation}
Since the moment generating function $M_T(\lambda) = {\mathbb E}[\exp(T \lambda)]$ is such that  $M_T(\lambda)= g_{-\lambda}(S|y_0),$ from \eqref{28bis}
$T$ has inverse gaussian distribution  $IG(a,b)$ of parameters $a$ (the shape) and $b$ (the mean), 
that is
\begin{equation}
\label{FPTpdf}
	g(t)=\sqrt{	\frac{a}{2\pi t^{3}}}\;
	\exp\bigg(-\frac{a(t-b)^2}{2 b^2 t}\bigg), \quad t > 0
	\end{equation}
where $g(t)$ denotes the FPT pdf $g(S,t|y_0)$ from now on. Statistical properties of $IG(a,b)$ have been investigated in \cite{MR110132}. Thus moments of $T$ result to be
\begin{equation}
\label{2.5}
{\mathbb E}[T^n] = b^n \sum_{k=0}^{n-1} \frac{(n-1+k)!}{k!(n-1-k)!} \frac{b^k}{(2 a)^k} =  \frac{\exp(a/b)}{b^{\frac{1}{2}-n}} {\mathcal K}_{n-\frac{1}{2}}\bigg(\frac{a}{b}\bigg)\sqrt{\frac{2 a}{\pi}}
\end{equation}
where ${\mathcal K}_{\nu}(z)$ is the modified Bessel function of second type \cite{MR1243179}
\begin{equation}
\label{modfbessel}
{\mathcal K}_{\pm \nu}(z)
= \frac{1}{2} \left( \frac{z}{2} \right)^{\nu}
\int_0^{\infty} x^{-\nu-1}
\exp \left( -x - \frac{z^2}{4 x}\right) \, {\rm d}\, x
\end{equation}
under the conditions $|{\rm arg}\, z|<\pi/2$ and ${\rm Re}\, z^2 > 0.$ Since
$z {\mathcal K}_{ \nu-1}(z) - z{\mathcal K}_{\nu+1}(z)=-2\nu {\mathcal K}_{\nu}(z)$\cite{MR1243179},
the following recursion formula holds for the FPT moments:
\begin{equation}
{\mathbb E}[T^{n+1}] = \frac{(2n-1)b^2}{a}  
{\mathbb E}[T^{n}] + b^2 {\mathbb E}[T^{n-1}], \quad {n \geq 1}
\label{recursionmom}
\end{equation}
with ${\mathbb E}[T^{0}]=1$ and ${\mathbb E}[T]=b.$ 

A new and alternative expression of the FPT moments can be given using
the partition polynomial
\cite{Charalambides}
\begin{equation}
G_n(y;x_1,\ldots,x_n)=\sum_{j=1} y^j B_{n,j}(x_1,  \ldots, x_{n-j+1})  
\end{equation}
where $\{B_{n,j}\}$
are the partial exponential Bell polynomials   
\begin{equation}\label{(parexpBell)}
B_{n,j}(x_1,  \ldots, x_{n-j+1}) =  \sum \frac{n!}{i_1! i_2! \cdots  i_{n-j+1}!} \prod_{k=1}^{n-j+1} \left(\frac{x_{k}}{k!}\right)^{i_k}
\end{equation}
with the sum taken over all sequences $i_1, i_2, \ldots, i_{n-j+1}$ of non negative integers  such that $i_1 + 2 i_2  + \cdots + (n-j+1) i_{n-j+1}= n$ and $i_1 + i_2 + \cdots + i_{n-j+1}  = j.$

We are now ready to  give the new (polynomial) closed-form expression of the moments of $T$ that relies on the simple expression of the cumulants,
overcoming the evaluation of the ${\mathcal K}_{\nu}(z)$ in Eq.\eqref{2.5}.
\begin{prop} 
\begin{equation}
{\mathbb E}[T^n] = \left( -\frac{2 b^2}{a} \right)^n G_n \left(-\frac{a}{b}; \frac{1}{2}, \left(\frac{1}{2}\right)_2, \ldots,  \left(\frac{1}{2}\right)_{n-j+1}\right) 
\label{cf1}
\end{equation}
\end{prop}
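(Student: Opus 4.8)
The plan is to obtain \eqref{cf1} directly from the moment generating function $M_T$ by recognizing the partition polynomial $G_n$ as the Fa\`a di Bruno expansion that arises when one composes an exponential with the square root appearing in \eqref{28bis}. First I would record that, since $M_T(\lambda)=g_{-\lambda}(S|y_0)$, replacing $\lambda$ by $-\lambda$ in \eqref{28bis} gives $M_T(\lambda)=\exp\{-\tfrac{a}{b}\,\psi(\lambda)\}$, where $\psi(\lambda):=\sqrt{1-\tfrac{2b^2}{a}\lambda}-1$ satisfies $\psi(0)=0$. Hence $\mathbb{E}[T^{n}]=M_T^{(n)}(0)$ is the $n$-th derivative at the origin of the composition $\lambda\mapsto e^{y\psi(\lambda)}$ with $y=-\tfrac{a}{b}$.

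The second step is to identify this derivative with $G_n$. Since $\tfrac{d^{j}}{du^{j}}e^{yu}=y^{j}e^{yu}$ and $\psi(0)=0$, the Fa\`a di Bruno formula yields $\mathbb{E}[T^{n}]=\sum_{j\ge1}y^{j}B_{n,j}(\psi'(0),\dots,\psi^{(n-j+1)}(0))=G_n(-\tfrac{a}{b};\psi'(0),\dots,\psi^{(n)}(0))$; equivalently, one may invoke the classical fact that the moments are the complete Bell polynomial of the cumulants $c_{k}(T)=-\tfrac{a}{b}\,\psi^{(k)}(0)$, which is exactly the ``simple expression of the cumulants'' alluded to before the statement. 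Then a one-line computation from the generalized binomial series $\sqrt{1+x}=\sum_{k\ge0}\binom{1/2}{k}x^{k}$ applied with $x=-\tfrac{2b^2}{a}\lambda$ gives $\psi^{(k)}(0)=k!\binom{1/2}{k}(-\tfrac{2b^2}{a})^{k}=(-\tfrac{2b^2}{a})^{k}\,(\tfrac12)_{k}$, where $(\tfrac12)_{k}$ denotes the (falling) factorial power used in \cite{Charalambides}.

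Finally I would peel off the geometric factor using the homogeneity of the partial Bell polynomials, $B_{n,j}(\beta x_{1},\beta^{2}x_{2},\dots,\beta^{\,n-j+1}x_{n-j+1})=\beta^{\,n}B_{n,j}(x_{1},\dots,x_{n-j+1})$, taken with $\beta=-\tfrac{2b^2}{a}$ and $x_{k}=(\tfrac12)_{k}$: the common factor $\beta^{\,n}$ comes out of every summand, leaving precisely \eqref{cf1}. I expect none of this to be genuinely hard; the only point requiring care is the sign bookkeeping — the Laplace-to-m.g.f.\ flip $M_T(\lambda)=g_{-\lambda}$, the sign inside $\sqrt{1-\tfrac{2b^2}{a}\lambda}$, and the falling-factorial convention for $(\tfrac12)_{k}$ must conspire to reproduce exactly the signs displayed in \eqref{cf1}. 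A sanity check against \eqref{recursionmom} for $n=1,2$, giving $\mathbb{E}[T]=b$ and $\mathbb{E}[T^{2}]=b^{3}/a+b^{2}$, confirms that the conventions line up.
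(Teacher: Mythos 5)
Your proposal is correct and follows essentially the same route as the paper: it rests on the moment--cumulant relation through the partial Bell polynomials (your Fa\`a di Bruno step applied to $e^{y\psi(\lambda)}$ is exactly identity \eqref{(comexpBell)}), the cumulant values $c_k[T]=\left(-\tfrac{a}{b}\right)\left(-\tfrac{2b^2}{a}\right)^k\left(\tfrac12\right)_k$, and the homogeneity property of $B_{n,j}$ to extract the factor $\left(-\tfrac{2b^2}{a}\right)^n$. The only cosmetic difference is that you rederive the cumulants from the binomial expansion of $\sqrt{1-\tfrac{2b^2}{a}\lambda}$ instead of quoting the known inverse Gaussian cumulants \eqref{cumulants}, and your sign bookkeeping and sanity checks for $n=1,2$ are consistent with \eqref{recursionmom}.
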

\begin{proof}
From the power series expansion of  $\ln [M_T(\lambda) - 1]$
with $M_T(\lambda)$ the moment generating function of $T$, cumulants  $\{c_n[T]\}$ of $T$ result to be \cite{MR110132}
\begin{equation}
\label{cumulants}
c_n[T] = (2 n - 3)!! \frac{b^{2n-1}}{a^{n-1}} \quad \text{for } n \geq 1
\end{equation}
with $(2 n - 3)!! = (2n-3) \cdots 5 \cdot 3 \cdot 1.$
Since $(2 n - 3)!! = (-1)^{n-1} 2^n \left( \frac{1}{2} \right)_n$ 
with 
$\left( \frac{1}{2} \right)_n = \prod_{j=0}^{n-1} (\frac{1}{2} - j)$ the lowering factorial, 
 from \eqref{cumulants}
we also have 
 \begin{equation}
\label{cumulants2}
c_n[T] =  \left(-\frac{a}{b}\right) \left(-\frac{2 b^2}{a}\right)^n  \left( \frac{1}{2} \right)_n  \quad \text{for } n \geq 1.
\end{equation}
Moments ${\mathbb E}[T^n]$
are related to cumulants
through the partial exponential Bell  polynomials \cite{DiNardo}  \begin{equation}
{\mathbb E}[T^n] = 
\sum_{j=1}^n B_{n,j}(c_1[T], \ldots, c_{n-j+1}[T]), \qquad \hbox{\rm for $n \geq 1,$}
\label{(comexpBell)}
\end{equation}
with $\{B_{n,j}\}$ given in \eqref{(parexpBell)}. 
The result follows replacing \eqref{cumulants2} in 
\eqref{(comexpBell)} and
using the well known property $B_{n,j}(p q x_1, p q^2 x_2, \ldots) = p^j q^n B_{n,j}( x_1,  x_2, \ldots)$  \cite{MR0262087}.
\end{proof}
Let us observe that from \eqref{cumulants}, the following recursion holds  for the FPT cumulants 
$$c_n[T]=  \frac{(2 n - 3) b^2}{a} c_{n-1}[T]$$
starting with $c_{1}[T] = {\mathbb E}[T] = b.$

\section{Series expansion}\label{section3}

To approximate
a pdf over $(0, \infty)$ using moments, a classical method \cite{MR388736} consists in expanding this function as an infinite series involving the generalized Laguerre polynomials $\{L_k^{(\alpha)}(t)\}_{k \geq 0}$ defined
for $\alpha > -1$ by 
$$L_0^{(\alpha)}(t) = 1 \quad
\text{and } \quad L_k^{(\alpha)}(t) = \sum_{i=0}^k 
\binom{k+\alpha}{k-i} \frac{(-t)^i}{i!}, \,\, k \geq 1.$$
No sufficient conditions are discussed in \cite{MR388736} to ensure that the pdf can be expanded formally as
an infinite series of generalized Laguerre polynomials, because the pdf is not known in general. This method has been applied to approximate the FPT pdf of a square-root process in  \cite{MR4159245}
giving also some sufficient conditions to justify the approximation.

Here we take advantage of the knowdlege of the FPT pdf $g(t)$ of a GBM  \eqref{FPTpdf} to discuss some conditions in order to use this method.  A first step in this direction is to recall the classical results on the completeness and orthogonality of $\{L_k^{(\alpha)}(t)\}$  in the weighted Hilbert space ${\mathcal L}^2_{t^{\alpha} e^{-t}}(0,\infty)$
equipped with the  inner product $\langle f_1,f_2 \rangle =\int_0^{\infty} f_1(t) \, f_2(t) \, t^{\alpha} \, e^{-t} \, {\rm d} t,$ see for example  \cite{MR1118381}. Using these properties, the following proposition provides the infinite series expansion of $g(t)$
in terms of generalized Laguerre polynomials. 
\begin{prop} \label{conv1} If \begin{equation}\label{suffcond}
I= \int_0^{\infty} t^{-(\alpha + 3)} \exp\bigg( - A t - \frac{a}{t} \bigg) {\rm d}\, t < \infty,
\end{equation}
with $A = \frac{a}{b^2} - \beta$  then for $t > 0$  
\begin{equation}
\label{expansion1}
g(t) = f_{\alpha,\beta}(t)
\bigg( 1 + \sum_{k \geq 1} {\mathcal B}_k^{(\alpha)} L_k^{(\alpha)}(\beta t)\bigg) \,\, \text{with } \,\, {\mathcal B}_k^{(\alpha)} = 1 + \sum_{j=1}^k \binom{k}{j} \frac{(-\beta)^j \mathbb E[T^j]}{(\alpha+j)_j},
\end{equation}
where $f_{\alpha,\beta}(t) = \beta (\beta t)^{\alpha} e^{-\beta t} / \Gamma(\alpha+1)$ is the gamma pdf with scale parameter $\alpha+1>0$ and shape parameter $\beta>0.$ 
\end{prop}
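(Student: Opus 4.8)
The plan is to expand $g(t)$ in the orthonormal basis of generalized Laguerre polynomials with respect to the weight $w_{\alpha,\beta}(t) = (\beta t)^\alpha e^{-\beta t}$ on $(0,\infty)$ and identify the coefficients in terms of the moments $\mathbb E[T^j]$. First I would write the candidate series formally: since $\{L_k^{(\alpha)}\}$ is complete and orthogonal in ${\mathcal L}^2_{t^\alpha e^{-t}}(0,\infty)$ (and, after the substitution $t\mapsto \beta t$, in the $w_{\alpha,\beta}$-weighted space), the function $g(t)/f_{\alpha,\beta}(t)$ admits the expansion $\sum_{k\ge 0} \langle g/f_{\alpha,\beta}, L_k^{(\alpha)}(\beta\,\cdot)\rangle_{w_{\alpha,\beta}}\, L_k^{(\alpha)}(\beta t)/\|L_k^{(\alpha)}\|^2$, provided $g/f_{\alpha,\beta}\in {\mathcal L}^2_{w_{\alpha,\beta}}$. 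The condition \eqref{suffcond} is exactly what guarantees this square-integrability: plugging in $g(t)$ from \eqref{FPTpdf} and $f_{\alpha,\beta}(t)$, the integrand of $\int_0^\infty (g/f_{\alpha,\beta})^2\, w_{\alpha,\beta}\,{\rm d}t$ is a constant times $t^{-(\alpha+3)}\exp(-At - a/t)$ with $A = a/b^2 - \beta$, so finiteness of $I$ is the hypothesis needed. I would therefore begin by carrying out this computation explicitly to show $I<\infty \iff g/f_{\alpha,\beta}\in {\mathcal L}^2_{w_{\alpha,\beta}}$, which legitimizes the series in the $L^2$ sense and hence (up to the usual arguments) pointwise for $t>0$.

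Next I would compute the coefficients. Using $\int_0^\infty L_k^{(\alpha)}(t)\,L_k^{(\alpha)}(t)\,t^\alpha e^{-t}\,{\rm d}t = \Gamma(\alpha+k+1)/k!$ and the change of variables $u=\beta t$, the $k$-th coefficient is a normalized integral $\int_0^\infty g(t)\, L_k^{(\alpha)}(\beta t)\,{\rm d}t$ divided by $\|L_k^{(\alpha)}\|^2$ times an explicit constant. The key identity is that $\int_0^\infty g(t)\,L_k^{(\alpha)}(\beta t)\,{\rm d}t = \mathbb E[L_k^{(\alpha)}(\beta T)]$, and expanding $L_k^{(\alpha)}(\beta t) = \sum_{i=0}^k \binom{k+\alpha}{k-i}(-\beta t)^i/i!$ turns this into a finite linear combination of the moments $\mathbb E[T^i]$. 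Collecting terms and using the identity $\binom{k+\alpha}{k-i}/\big(\Gamma(\alpha+k+1)/(k!\,i!)\big)$ simplifies (after writing $\binom{k+\alpha}{k-i} = \Gamma(\alpha+k+1)/(\Gamma(\alpha+i+1)(k-i)!)$) to $\binom{k}{i}/(\alpha+i)_i$, which yields precisely ${\mathcal B}_k^{(\alpha)} = 1 + \sum_{j=1}^k \binom{k}{j}(-\beta)^j\mathbb E[T^j]/(\alpha+j)_j$ after noting the $j=0$ term contributes the leading $1$. This is bookkeeping with Pochhammer symbols and the Gamma function and should go through routinely.

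The main obstacle, and the step deserving the most care, is the passage from $L^2$-convergence of the expansion to the pointwise statement $g(t) = f_{\alpha,\beta}(t)(1 + \sum_{k\ge1}{\mathcal B}_k^{(\alpha)}L_k^{(\alpha)}(\beta t))$ for every $t>0$. Square-integrability alone gives convergence in the weighted $L^2$ norm, not pointwise; one typically needs either a classical result on pointwise convergence of Laguerre expansions under mild regularity (the ratio $g/f_{\alpha,\beta}$ is real-analytic on $(0,\infty)$, so local uniform convergence results apply), or one argues that both sides are continuous and agree as $L^2$ elements. I expect the proof to invoke a known completeness/expansion theorem for the Laguerre system in the weighted Hilbert space (as cited, \cite{MR1118381}), under which membership in ${\mathcal L}^2_{w_{\alpha,\beta}}$ plus continuity of $g/f_{\alpha,\beta}$ suffices. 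A secondary technical point is verifying $\alpha>-1$ and $\beta>0$ are the only standing requirements on the free parameters, which is immediate, and checking that $I<\infty$ is automatically satisfied when $A>0$ (i.e. $\beta<a/b^2$) while the $a/t$ term controls integrability near $t=0$ regardless of the sign of $A$ — a remark worth including for the subsequent discussion of admissible reference densities.
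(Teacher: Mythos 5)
Your proposal follows essentially the same route as the paper: expand $g(t)/f_{\alpha,\beta}(t)$ in the complete orthonormal system $\{a_k^{(\alpha)}L_k^{(\alpha)}(\beta t)\}$ of the gamma-weighted space ${\mathcal L}^2_{f_{\alpha,\beta}(t)}(0,\infty)$, observe that \eqref{suffcond} is exactly the square-integrability of $g/f_{\alpha,\beta}$, and identify the coefficients as $\mathbb E[L_k^{(\alpha)}(\beta T)]$, which after the Pochhammer bookkeeping gives ${\mathcal B}_k^{(\alpha)}$. Your remark on upgrading $L^2$-convergence to the pointwise identity for $t>0$ is a fair technical point that the paper's proof passes over silently, so nothing in your plan diverges from, or falls short of, the published argument.
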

\begin{proof}
Consider the weighted Hilbert space ${\mathcal L}^2_{f_{\alpha,\beta}(t)}(0,\infty)$ equipped with the inner product $\langle f_1,f_2 \rangle =\int_0^{\infty} f_1(t) \, f_2(t) \, f_{\alpha,\beta}(t)  \, {\rm d} t$ and set 
$$a_k^{(\alpha)} = (-1)^k \bigg(\frac{\Gamma(\alpha+1+k)}{k! \,\Gamma(\alpha+1)}\bigg)^{-1/2} \,\, \text{for }k \geq 1.$$
By recalling that
$\int_0^{\infty} t^{\alpha} \exp(-t) L_n^{(\alpha)}(t) 
L_m^{(\alpha)}(t)  \, {\rm d} t = \Gamma(n + \alpha + 1)/n! \delta_{n,m},$ with $\delta_{n,m}$ the Kronecker
delta function, the sequence $\{a_k^{(\alpha)} L^{(\alpha)}_k(\beta t)\}$
turns to be complete and orthonormal in ${\mathcal L}^2_{f_{\alpha,\beta}(t)}(0,\infty).$ Condition \eqref{suffcond} is equivalent to require
$g(t)/f_{\alpha,\beta}(t) \in {\mathcal L}^2_{f_{\alpha,\beta}(t)}(0,\infty)$ since 
\begin{equation}
\label{suffcond1}
\int_0^{\infty}
\frac{[g(t)]^2}{ f_{\alpha,\beta}(t)} \, {\rm d}\, t = \frac{a \, \Gamma(\alpha+1) \, e^{2 a/b}}{ 2 \pi \beta^{\alpha+1}} \, I.
\end{equation}
Due to the completeness of the sequence, the pdf $g(t)$ can be expanded in terms of 
$\{a_k^{(\alpha)} L^{(\alpha)}_k(\beta t)\},$ that is
\begin{equation}
\label{expansion0}
\frac{g(t)}{f_{\alpha,\beta}(t)} = \sum_{k \geq 0} \big[a_k^{(\alpha)}\big]^2
\mathbb E\big[L^{(\alpha)}_k(\beta T)\big]L^{(\alpha)}_k(\beta t).
\end{equation}
Expansion \eqref{expansion1} follows after some algebraic manipulations of the rhs of \eqref{expansion0}.
\end{proof}
\begin{cor}
Condition \eqref{suffcond} is fulfilled iff 
\begin{equation}
    \beta \leq \frac{c_1[T]}{
    c_2[T]} = \frac{1}{\sigma^2} \left( \mu - \frac{\sigma^2}{2} \right)^2,
    \label{condcum}
\end{equation}
with $c_1[T]$ and $c_2[T]$ the first and the second cumulant, respectively.
\end{cor}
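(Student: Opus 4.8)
The plan is to reduce condition \eqref{suffcond} to an elementary statement about the sign of the constant $A = a/b^2 - \beta$, and then translate that statement into the claimed inequality using the explicit formulas for $a$ and $b$ together with \eqref{cumulants}.

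First I would analyse the integrand $t^{-(\alpha+3)}\exp(-At - a/t)$ at the two ends of $(0,\infty)$. Near $t=0^+$ the factor $\exp(-a/t)$ decays faster than any power of $t$ can blow up — here one uses $a>0$, which is part of the standing hypothesis — so $I$ always converges at the origin, regardless of $\alpha$ and $A$. Near $t=+\infty$ the factor $\exp(-a/t)$ tends to $1$, so the integrand is comparable to $t^{-(\alpha+3)}e^{-At}$. Thus: if $A>0$ the exponential forces convergence; if $A=0$ convergence still holds because $\alpha+3>2>1$, using $\alpha>-1$; and if $A<0$ the integrand grows exponentially and $I=+\infty$. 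Hence $I<\infty$ if and only if $A\geq 0$, i.e. $\beta\leq a/b^2$.

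It then remains only to identify $a/b^2$. On one hand, substituting $a=(\ln S-\ln y_0)^2/\sigma^2$ and $b=(\ln S-\ln y_0)/(\mu-\sigma^2/2)$ gives directly $a/b^2=(\mu-\sigma^2/2)^2/\sigma^2$. On the other hand, the cases $n=1$ and $n=2$ of \eqref{cumulants} yield $c_1[T]=b$ and $c_2[T]=b^3/a$, so $c_1[T]/c_2[T]=a/b^2$; this closes the chain of equalities displayed in \eqref{condcum}.

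I do not anticipate a real obstacle: the argument is essentially a convergence test at $0$ and at $\infty$. The only point needing a moment's care is the borderline case $A=0$, where convergence at infinity must be checked directly from the power $t^{-(\alpha+3)}$ rather than inferred from exponential decay — and this is precisely where the assumption $\alpha>-1$ enters.
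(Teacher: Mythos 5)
Your proof is correct. The logical skeleton is the same as the paper's---both reduce \eqref{suffcond} to the sign of $A=a/b^{2}-\beta$ and treat the three cases $A>0$, $A=0$, $A<0$---but the way you certify convergence differs: the paper evaluates $I$ in closed form, invoking Gradshteyn--Ryzhik $3.471$ no.\ $9$ to express it through the modified Bessel function ${\mathcal K}_{\alpha+2}(2\sqrt{aA})$ when $A>0$ and a change of variable giving $a^{-(\alpha+2)}\Gamma(\alpha+2)$ when $A=0$, whereas you use only elementary endpoint comparisons (super-polynomial decay of $e^{-a/t}$ at $0^{+}$, and at infinity the tail $t^{-(\alpha+3)}e^{-At}$ with the borderline case handled by $\alpha+3>1$). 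Your route is more self-contained and makes transparent exactly where $a>0$ and $\alpha>-1$ enter; the paper's route buys the explicit value of the integral, which is reused in spirit in the proof of Theorem \ref{3.3}, where the same Bessel-type evaluations control the constants $C_k$. You also add a small step the paper leaves implicit in the proof, namely checking from \eqref{cumulants} that $c_1[T]=b$ and $c_2[T]=b^{3}/a$, hence $c_1[T]/c_2[T]=a/b^{2}=(\mu-\sigma^{2}/2)^{2}/\sigma^{2}$; this is a harmless and welcome verification.
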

\begin{proof}
Note that \eqref{condcum} is equivalent to have
$A = \frac{1}{\sigma^2} \left( \mu - \frac{\sigma^2}{2} \right)^2 - \beta \geq 0$
in \eqref{suffcond}. If $A > 0$ from $3.471$ no. $9$  in \cite{MR1243179}, we get
$$ \int_0^{\infty} t^{-(\alpha + 3)} \exp\bigg( - A t - \frac{a}{t} \bigg) {\rm d}\, t = 2 \left( \frac{A}{a} \right) ^{\frac{2+\alpha}{2}} {\mathcal K}_{\alpha+2}(2 \sqrt{a A}) < \infty 
$$
where ${\mathcal K}_{\nu}(z)$ is the modified Bessel function of second type. If $A=0,$ by a suitable change of variable and using $3.478$ no. $1$ in \cite{MR1243179}, we have
$$\int_0^{\infty} t^{-(\alpha + 3)} \exp\bigg( - A t - \frac{a}{t} \bigg) {\rm d}\, t = \int_0^{\infty} x^{(\alpha+2)-1} e^{-a x}
{\rm d} x = a^{-(\alpha+2)} \Gamma(\alpha+2) < \infty.$$
If $A<0,$ the integrand function in $I$
grows with $t$ and the condition 
\eqref{suffcond} is not fulfilled.
\end{proof}
Propositions \ref{conv1} justifies the approximation of the FPT pdf $g(t)$ with $\hat{g}_n(t)$ 
\begin{equation}
\label{expansion2}
\hat{g}_n(t) = f_{\alpha,\beta}(t) p_n(t)
\quad \text{with }\quad
p_n(t)=\bigg( 1 + \sum_{k = 1}^n {\mathcal B}_k^{(\alpha)} L_k^{(\alpha)}(\beta t)\bigg)
\end{equation}
a polynomial of degree $n \geq 1$ for a suitable choice of $n.$
Due to the orthogonality property of generalized Laguerre polynomials, we observe that $\int_0^{\infty}  \hat{g}_n(t) {\rm d}t = 1$
for all $n \geq 0,$  and
the first $n$ moments of
$\hat{g}_n(t)$ are the same of $g(t).$ 

The main issue in the approximation \eqref{expansion2} is the choice of the degree $n$ of the polynomial $p_n(t),$ that we discuss numerically in the next sections. The higher is the order $n$ the better should be the approximation. Indeed 
using the Parseval's formula \cite{MR1563024}
the error in replacing $g(t)$ with $\hat{g}_n(t)$ for $t>0$
is such that
\begin{equation}
\label{errorPars}
\Gnorm{ \frac{g(t)}{f_{\alpha,\beta}(t)} - p_n(t)}_{\alpha,\beta} = o \left( \frac{1}{n} \right) \quad 
\text{as } n \rightarrow \infty
\end{equation}
where $\Gnorm{\,\,}_{\alpha,\beta}$ denotes 
the norm in ${\mathcal L}^2_{f_{\alpha,\beta}(t)}(0,\infty).$ 
The following theorem improves \eqref{errorPars} for suitable choice of $k <n.$
\begin{thm} \label{3.3}
If $\beta < c_1[T]/c_2[T]$ and $k \in {\mathbb N},$ then there exists a constant $C_k > 0$  such that
\begin{equation}
\label{error1}
\Gnorm{ \frac{g(t)}{f_{\alpha,\beta}(t)} - p_n(t)}_{\alpha,\beta} \leq C_k \bigg( \frac{1}{\sqrt{n}}
\bigg)^k \quad \hbox{\rm for all } \,\, n > k
\end{equation}
with $C_k = O(k^k).$
\end{thm}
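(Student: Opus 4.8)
The plan is to read the left side of \eqref{error1} as the tail of a Fourier--Laguerre series and to extract decay by playing it against the Laguerre differential operator. Put $h(t):=g(t)/f_{\alpha,\beta}(t)$. By \eqref{expansion0}--\eqref{expansion1} and the orthonormality of the system $\{a_k^{(\alpha)}L_k^{(\alpha)}(\beta\,\cdot)\}$ in ${\mathcal L}^{2}_{f_{\alpha,\beta}(t)}(0,\infty)$,
\[
\Gnorm{h-p_n}_{\alpha,\beta}^{2}=\sum_{k>n}\frac{(\mathcal B_k^{(\alpha)})^{2}}{(a_k^{(\alpha)})^{2}}=\sum_{k>n}(a_k^{(\alpha)})^{2}\big(\mathbb E[L_k^{(\alpha)}(\beta T)]\big)^{2},
\]
since $\mathcal B_k^{(\alpha)}=(a_k^{(\alpha)})^{2}\mathbb E[L_k^{(\alpha)}(\beta T)]$. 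Introduce $\mathcal D f:=t f''+(\alpha+1-\beta t)f'$. From the Laguerre ODE one has $\mathcal D[L_k^{(\alpha)}(\beta\,\cdot)]=-\beta k\,L_k^{(\alpha)}(\beta\,\cdot)$, and writing $\mathcal D f=w^{-1}(pf')'$ with $w(t)=t^{\alpha}e^{-\beta t}$, a constant multiple of $f_{\alpha,\beta}(t)$, and $p(t)=t^{\alpha+1}e^{-\beta t}$ shows that $\mathcal D$ is symmetric on ${\mathcal L}^{2}_{f_{\alpha,\beta}(t)}(0,\infty)$; the boundary terms in the integration by parts against $h$ vanish because $h$ carries a factor $e^{-a/(2t)}$ as $t\to 0^{+}$ and an exponential as $t\to+\infty$.

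Granting momentarily that $\mathcal D^{m}h\in{\mathcal L}^{2}_{f_{\alpha,\beta}(t)}(0,\infty)$ for every $m\ge 1$, iterating the symmetry identity $m$ times shows that the $k$-th Fourier--Laguerre coefficient of $\mathcal D^{m}h$ equals $(-\beta k)^{m}$ times that of $h$; hence $\Gnorm{\mathcal D^{m}h}_{\alpha,\beta}^{2}=\sum_{k\ge 0}(\beta k)^{2m}(a_k^{(\alpha)})^{2}(\mathbb E[L_k^{(\alpha)}(\beta T)])^{2}$ and therefore
\[
\Gnorm{h-p_n}_{\alpha,\beta}^{2}\le\frac{1}{(n\beta)^{2m}}\sum_{k>n}(\beta k)^{2m}(a_k^{(\alpha)})^{2}\big(\mathbb E[L_k^{(\alpha)}(\beta T)]\big)^{2}\le\frac{\Gnorm{\mathcal D^{m}h}_{\alpha,\beta}^{2}}{(n\beta)^{2m}}.
\]
For $k=2m$ this is \eqref{error1} with $C_k=\Gnorm{\mathcal D^{m}h}_{\alpha,\beta}\,\beta^{-m}$; for $k=2m-1$ one writes $n^{-m}=n^{-k/2}n^{-1/2}\le (k+1)^{-1/2}n^{-k/2}$ for $n>k$ and absorbs the extra factor. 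The strict hypothesis $\beta<c_1[T]/c_2[T]$, equivalently $A:=a/b^{2}-\beta>0$, is exactly what makes $\mathcal D^{m}h$ square-integrable: since $h'/h$ is a Laurent polynomial one gets by induction $\mathcal D^{m}h=Q_m(t)\,h(t)$ for a Laurent polynomial $Q_m$, and then, by \eqref{suffcond1},
\[
\Gnorm{\mathcal D^{m}h}_{\alpha,\beta}^{2}=\int_{0}^{\infty}|Q_m(t)|^{2}\,\frac{[g(t)]^{2}}{f_{\alpha,\beta}(t)}\,{\rm d}t=\frac{a\,\Gamma(\alpha+1)\,e^{2a/b}}{2\pi\beta^{\alpha+1}}\int_{0}^{\infty}|Q_m(t)|^{2}\,t^{-(\alpha+3)}e^{-At-a/t}\,{\rm d}t<\infty,
\]
the integral converging because $A>0$ controls $|Q_m|^{2}t^{-(\alpha+3)}$ at $+\infty$ while the factor $e^{-a/t}$ controls it at $0$ (at the borderline $A=0$ this already fails for $m$ large, consistently with \eqref{errorPars} being all one can assert there).

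What remains, and this is the technical heart, is to estimate the growth of $\Gnorm{\mathcal D^{m}h}_{\alpha,\beta}$ in $m$. Expanding $\mathcal D(Qh)$ gives a recursion $Q_{m+1}=\mathcal R(Q_m)$ on Laurent polynomials whose effect is to raise the highest exponent by at most $1$ and to lower the lowest exponent by at most $3$ (the latter from the $t^{-4}$ term of $(h'/h)^{2}$), so $Q_m$ is supported on exponents in $[-3m,m]$; its coefficients obey a linear recursion whose multipliers are $O(m^{2})$, whence $\sum_j|q_j|$ admits a factorial-type bound. Evaluating the moment integrals through $\int_{0}^{\infty}t^{\nu-1}e^{-At-a/t}{\rm d}t=2(a/A)^{\nu/2}{\mathcal K}_{\nu}(2\sqrt{aA})$ together with ${\mathcal K}_{\nu}(z)\sim\tfrac12\Gamma(|\nu|)(2/z)^{|\nu|}$ as $|\nu|\to\infty$, one bounds the last integral by $\Gamma(O(m))$ times a geometric factor; combining with $\beta^{-m}$ and Stirling's formula produces a bound of order $k^{k}$ (up to an exponential factor in $k$ absorbed in the implied constant), and likewise for odd $k$, giving $C_k=O(k^{k})$. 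An alternative route is to read the coefficients off the identity $\sum_{k\ge 0}\mathbb E[L_k^{(\alpha)}(\beta T)]s^{k}=(1-s)^{-\alpha-1}M_T(-\beta s/(1-s))$, whose only singularity in the closed unit disc is an essential one at $s=1$ of the form $\exp(-\sqrt{2a\beta}\,(1-s)^{-1/2})(1-s)^{-\alpha-1}$; a saddle-point estimate then yields the sharp pointwise decay $\mathbb E[L_k^{(\alpha)}(\beta T)]=O(e^{-c\,k^{1/3}})$, from which \eqref{error1} follows with an explicit $C_k$ after optimizing over $m$. In either approach the only obstacle is the bookkeeping that pins the $k$-dependence of the constant.
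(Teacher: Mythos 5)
Your argument reaches \eqref{error1} by a genuinely different route from the paper's, although the two are close cousins. The paper invokes a ready-made Jackson-type estimate (Theorem 6.2.5 of the cited monograph), which bounds the projection error by $C n^{-k/2}\Gnorm{t^{k/2}\tfrac{{\rm d}^k}{{\rm d}t^k}h}_{\alpha,\beta}$ with $h=g/f_{\alpha,\beta}$, and then verifies the weighted-Sobolev hypothesis by an explicit recursion ${\rm d}^m h/{\rm d}t^m=(\hbox{Laurent polynomial})\cdot h$ plus the Bessel integral $3.471.9$. You instead re-derive the spectral rate from first principles: the tail identity $\Gnorm{h-p_n}^2_{\alpha,\beta}=\sum_{k>n}(a_k^{(\alpha)})^2\big(\mathbb E[L_k^{(\alpha)}(\beta T)]\big)^2$, the eigenrelation $\mathcal D L_k^{(\alpha)}(\beta\,\cdot)=-\beta k\,L_k^{(\alpha)}(\beta\,\cdot)$, the Sturm--Liouville symmetry of $\mathcal D$ (your boundary terms do vanish, since $p(t)=t^{\alpha+1}e^{-\beta t}\to 0$ at both ends thanks to $\alpha+1>0$, and $h$ and its derivatives carry $e^{-a/(2t)}$ at $0$ and $e^{-\tilde A t}$ at $\infty$), and a Chebyshev-type tail bound. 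This is self-contained, yields the explicit constant $C_k=\beta^{-\lceil k/2\rceil}\Gnorm{\mathcal D^{\lceil k/2\rceil}h}_{\alpha,\beta}$ rather than an unspecified $C$ inherited from a reference, and your treatment of odd $k$ via $n^{-1/2}\le(k+1)^{-1/2}$ is correct. The price is that you must check $\mathcal D^m h\in{\mathcal L}^2_{f_{\alpha,\beta}(t)}(0,\infty)$ yourself, which you do by essentially the same device as the paper (a Laurent-polynomial recursion $Q_m$, with support correctly tracked in $[-3m,m]$, and the condition $A>0$ entering in exactly the same place). So the existence of a finite $C_k$ for each $k$, and the rate $n^{-k/2}$, are fully established.

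The only soft spot is the growth claim $C_k=O(k^k)$, which you candidly leave as bookkeeping; but the paper's own treatment of this point is no more rigorous, and in fact a literal reading of either computation gives more: the most singular term of $\Gnorm{\mathcal D^m h}^2_{\alpha,\beta}$ (respectively of $I_k$) pairs $t^{-3k+O(1)}$ against $e^{-a/t}$ and contributes $\Gamma(3k+O(1))$ times geometric factors, i.e.\ growth of order $k^{3k/2}$ for the norm rather than $k^k$; moreover an extra factor $c^k$ cannot be \emph{absorbed} into $O(k^k)$. Since only the existence of $C_k$ and the rate in $n$ are used downstream, this does not undermine your proof of the displayed inequality, but you should either prove the $k^k$ bound honestly or weaken the stated growth of the constant. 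The concluding generating-function/saddle-point aside is plausible but unnecessary for the theorem.
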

\begin{proof}
Set $\tilde{a}=\frac{a}{2}$ and $\tilde{A} = \frac{a}{2b^2} - \beta$ in order to write
$$\frac{g(t)}{f_{\alpha,\beta}(t)} = D \,
\exp \left( - \tilde{A} t -  \frac{\tilde{a}}{t} \right) \, t^{-\frac{3}{2} - \alpha} \quad \text{with } \quad D = \sqrt{\frac{\tilde{a}}{ \pi}} \frac{e^{a/b} \Gamma(\alpha+1)}{\beta^{\alpha+1}}.$$ 
According to Theorem 6.2.5 in \cite{MR1176949}, if for a fixed $k \in {\mathbb N}$ 
\begin{equation}
\label{cond1bis}
t^{m/2} \frac{{\rm d}^m}{{\rm d}t^m} \left[ \frac{g(t)}{f_{\alpha,\beta}(t)}\right] \in {\mathcal L}^2_{f_{\alpha,\beta}(t)}(0,\infty) \quad \hbox{for } \quad 0 \leq m\leq k
\end{equation}
then there exists a constant $C > 0$  such that
\begin{equation}
\label{book}
\Gnorm{ \frac{g(t)}{f_{\alpha,\beta}(t)} - p_n(t)}_{\alpha,\beta} \leq C \bigg( \frac{1}{\sqrt{n}}
\bigg)^k \Gnorm{ t^{k/2} \frac{{\rm d}^k}{{\rm d}t^k} \left[ \frac{g(t)}{f_{\alpha,\beta}(t)}\right]}_{\alpha,\beta}
\quad \hbox{\rm for all} \,\,\, n > k.
\end{equation}
By recursion, we have
\begin{equation}
\frac{{\rm d}^m}{{\rm d}t^m} \left[ \frac{g(t)}{f_{\alpha,\beta}(t)} \right] = \frac{(-1)^m}{2^m} D \, 
\exp \left( - \tilde{A} t -  \frac{\tilde{a}}{t} \right) \, t^{-2 m - \frac{3}{2} - \alpha} \, q_{2m}(t) \quad \hbox{for $m \geq 0$ }
\label{derivf}
\end{equation}
where $q_{2m}(t)$ is a polynomial of degree $2m$
 such that
\begin{equation}
q_{2m}(t) = \sum_{j=0}^{2k}c_{2m,j} t^j = q_{2m-2}(t)[2 \, \tilde{A} \, t^2 + (4m-1+2\alpha)\,t-2\,\tilde{a}]-\,2t^2 \, \frac{{\rm d}}{{\rm d}t}  q_{2m-2}(t)
\label{polrec}
\end{equation}
with $q_0(t)=1$ and $c_{2m,2m} = (2 \tilde{A})^m \ne 0, c_{2m,0} =(-2 \tilde{a})^m.$

Now set $$\tilde{q}_{4m}(t)=[q_{2m}(t)]^2 = \sum_{j=0}^{4m} \tilde{c}_{4m,j}t^j 
\,\,\,
\text{ and } \,\,
I_{m}= \int_0^{\infty}
t^{m} \left( \frac{{\rm d}^m}{{\rm d}t^m} \left[ \frac{g(t)}{f_{\alpha,\beta}(t)} \right] \right)^2 f_{\alpha,\beta}(t) {\rm d}t.$$
Fix an integer $0 \leq k <n.$ Since  $2 \tilde{A} + \beta = \frac{c_1[T]}{
c_2[T]} - \beta > 0$ we have $I_m < \infty$ for all $m \geq 0$ and in particular
condition \eqref{cond1bis} holds for $0 \leq m \leq k.$  
Indeed using 
the modified Bessel function of second type \eqref{modfbessel} and the integral $3.471$ no. $9$  in \cite{MR1243179}, we have
\begin{eqnarray}
&  & \qquad I_{m} =
\frac{D^{2}}{2^{2m}} \sum_{j=0}^{4m} \tilde{c}_{4m,j} \int_0^{\infty} \exp \left( - (2 \tilde{A} + \beta) t -   \frac{a}{t} \right) \, t^{(-3m-2-\alpha+j) -1} \,
 {\rm d} \,t  \label{IM} \\
 &  & =  \frac{D^{2}}{2^{2m-1}} \sum_{j=0}^{4m} \tilde{c}_{4m,j} {\mathcal K}_{3m-j+(\alpha+2)}\bigg(\sqrt{4 a(2 \tilde{A} + \beta)}\bigg)
 \left[ \sqrt{\frac{a}{2 \tilde{A} + \beta}} \right]^{-3m -2-\alpha+j} < \infty.
 \nonumber
\end{eqnarray}
Eq. \eqref{error1} follows from \eqref{book} setting  $m=k$ and 
\begin{equation}
\label{Ck}
C_k = C I_k \propto \frac{1}{2^{2k}} \sum_{j=0}^{4k} \tilde{c}_{4k,j} {\mathcal K}_{3k-j+(\alpha+2)}\bigg(\sqrt{4 a(2 \tilde{A} + \beta)}\bigg)
 \left[ \sqrt{\frac{a}{2 \tilde{A} + \beta}} \right]^{-3k -2-\alpha+j}.  
\end{equation}
In \eqref{Ck}, note that $3k-j+\alpha+2 > 0$ for $j=0, \ldots, 3k +1$ as $\alpha+1>0.$ For $3k+2 \leq j \leq 4k,$ the order of the modified Bessel function involved in $C_k$ might be positive, depending on the magnitude of $\alpha.$ Let us first suppose $\alpha > k-2$
such that $3k-j+\alpha+2 > 0$ for all $j =0, \ldots, 4k.$
As for $\nu \rightarrow \infty$ one has \cite{bessel}
\begin{equation}
\label{asint}
{\mathcal K}_{\nu}(z) \sim \sqrt{\frac{\pi}{2 \nu}}
\left( \frac{2 \nu}{ e z} \right)^{\nu}
\end{equation}
then
\begin{equation}
\label{firstas}
C_k  \propto \frac{1}{2^{2k}} \sum_{j=0}^{4k} \frac{\tilde{c}_{4k,j}}{\sqrt{3k-j+\alpha+2}} \left( \frac{3k-j+\alpha+2}{e a} \right)^{3k-j+\alpha+2}.
\end{equation}
When $k$ grows, the dominant term in \eqref{firstas} is for $j=0,$ and the result follows.
If $\alpha < k-2,$
then $C_k$ might be splitted in 
$C_k \propto C_{k,1} + C_{k,2}$ with
\begin{eqnarray*}
C_{k,1} & = & \frac{1}{2^{2k}} \sum_{j=0}^{k^*} \tilde{c}_{4k,j} {\mathcal K}_{3k-j+(\alpha+2)}\bigg(\sqrt{4 a(2 \tilde{A} + \beta)}\bigg)
 \left[ \sqrt{\frac{a}{2 \tilde{A} + \beta}} \right]^{-3k -2-\alpha+j}\\
 C_{k,2} & = &  \frac{1}{2^{2k}} \sum_{j=k^*+1}^{4k} \tilde{c}_{4k,j} {\mathcal K}_{3k-j+(\alpha+2)}\bigg(\sqrt{4 a(2 \tilde{A} + \beta)}\bigg)
 \left[ \sqrt{\frac{a}{2 \tilde{A} + \beta}} \right]^{-3k -2-\alpha+j}
\end{eqnarray*}
where $k^*$ is such that
$3k-k^*+\alpha+2 > 0$
and $3k-k^*+\alpha+1 < 0.$ For $\alpha \in (-1,0),$
we have $3k-j+(\alpha+2)<0$ for 
$3k+2\leq j\leq 4k$ and $C_{k,2}$ includes the maximum number of terms,
that is
\begin{eqnarray}
C_{k,2} & = &  \frac{1}{2^{2k}} \sum_{j=3k+1}^{4k} \tilde{c}_{4k,j} {\mathcal K}_{j-3k-(\alpha+2)}\bigg(\sqrt{4 a(2 \tilde{A} + \beta)}\bigg)
 \left[ \sqrt{\frac{a}{2 \tilde{A} + \beta}} \right]^{-3k -2-\alpha+j} \nonumber\\
 & \sim & \frac{1}{2^{2k}} \sum_{j=3k+1}^{4k} \frac{\tilde{c}_{4k,j}}{\sqrt{j-3k-\alpha-2}} \left( \frac{j-3k-\alpha-2}{e (2 \tilde{A}+\beta)} \right)^{j-3k-\alpha-2}
 \label{Ck2}.
\end{eqnarray}
The  dominant term in \eqref{Ck2} is for
$j=4k$ and the asymptotic behaviour of $C_k$ is still of order $k^k$. 
\end{proof}
Observe that for $k=2,$ from \eqref{error1} we recover \eqref{errorPars}. For higher values of $n,$ a good choice is $k <\!\!\!\!\!< \sqrt{n}.$
\begin{rem}\label{limitcase}
Note that if 
$\beta = \frac{c_1[T]}{
    c_2[T]} = \frac{1}{\sigma^2} \left( \mu - \frac{\sigma^2}{2} \right)^2$
the integral $I_m$ in \eqref{IM} converges if and only if $\alpha > m-2.$ Indeed in such a case $2 \tilde{A} + \beta=0$ and $I_m$ in \eqref{IM} reduces to
\begin{equation}
I_{m} = \sum_{j=0}^{4m} \tilde{c}_{4k,j} \int_0^{\infty} \exp \left( - a y \right) \, y^{(3m+2+\alpha-j) -1} \,
 {\rm d} \,t.
 \label{specialcase}
\end{equation}
The integral on the rhs of \eqref{specialcase} is convergent if and only if $3m+2+\alpha-j>0$ for all $j=0,\ldots,4m,$ that is if and only if $\alpha > m-2.$ Therefore  \eqref{error1} still holds with $k < \alpha+2.$ In such a case we have 
\begin{eqnarray}
C_k & = & C I_k \propto \frac{1}{2^{2k}} \sum_{j=0}^{4k} \tilde{c}_{4k,j}
a^{3k + \alpha +2 -j} \Gamma(3k-j+\alpha+2)  \nonumber \\
& \propto &  \frac{1}{2^{2k}} \sum_{j=0}^{4k} \tilde{c}_{4k,j}
a^{3k + \alpha +2 -j} 
\frac{(3k-j)^{3k-j+\alpha+3/2}}{e^{3k-j}} \label{Ck2bis}
\end{eqnarray}
as $\Gamma(z+b) \sim \sqrt{2 \pi} e^{-z} z^{z+b-1/2}.$ As the leading term in \eqref{Ck2bis} is for $j=0,$ we still have $C_k=O(k^k).$ 
\end{rem}

Even if $\beta > \sigma^2 B^2,$ it is still possible to use the rhs of \eqref{expansion1} due to the following proposition. Since the result is a reformulation
of Theorem 2 in \cite{MR4159245}, the proof is omitted.
\begin{prop}\label{conv2}
For $t >0$ and $r \in (0,1),$ we have
$$U(\beta t, r):= 1 + \sum_{k \geq 1}  {\mathcal B}_k^{(\alpha)} L_k^{(\alpha)}(\beta t) r^k < \infty \quad \text{and }\quad \lim_{r \uparrow 1}U(\beta t, r) = 
\frac{g(S,t|y_0)}{f_{\alpha,\beta}(t)}.$$
\end{prop}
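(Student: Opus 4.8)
Since the statement reworks Theorem 2 in \cite{MR4159245}, which treats the analogous series for the square-root process, I would follow the same strategy. Comparing \eqref{expansion0} with \eqref{expansion1} identifies $\mathcal{B}_k^{(\alpha)}=\big[a_k^{(\alpha)}\big]^2\,\mathbb{E}\big[L_k^{(\alpha)}(\beta T)\big]$ with $\big[a_k^{(\alpha)}\big]^2=k!\,\Gamma(\alpha+1)/\Gamma(k+\alpha+1)$, a formula valid for $k\ge 0$; hence, at least formally,
\[
U(\beta t,r)=\Gamma(\alpha+1)\sum_{k\ge 0}\frac{k!}{\Gamma(k+\alpha+1)}\,L_k^{(\alpha)}(\beta t)\Big(\int_0^{\infty}L_k^{(\alpha)}(\beta s)\,g(s)\,{\rm d}s\Big)r^{k}.
\]
The plan is to pull the integral outside and recognise a Poisson kernel: by the Hardy--Hille formula,
\[
\sum_{k\ge 0}\frac{k!}{\Gamma(k+\alpha+1)}L_k^{(\alpha)}(x)L_k^{(\alpha)}(y)r^{k}=\frac{(xyr)^{-\alpha/2}}{1-r}\exp\!\Big(-\frac{(x+y)r}{1-r}\Big)I_\alpha\!\Big(\frac{2\sqrt{xyr}}{1-r}\Big)=:\mathcal{P}_r^{(\alpha)}(x,y)\ge 0
\]
for $0<r<1$, with $I_\alpha$ the modified Bessel function of the first kind. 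A direct computation (the reproducing property of this kernel on constants, equivalently the case $g\equiv f_{\alpha,\beta}$ in which only the $k=0$ term survives) gives the normalisation $\Gamma(\alpha+1)\int_0^{\infty}\mathcal{P}_r^{(\alpha)}(\beta t,\beta s)f_{\alpha,\beta}(s)\,{\rm d}s=1$, so $\Phi_r(t,s):=\Gamma(\alpha+1)\mathcal{P}_r^{(\alpha)}(\beta t,\beta s)f_{\alpha,\beta}(s)$ is a probability density in $s$ and, once the interchange of sum and integral is justified, $U(\beta t,r)=\int_0^{\infty}\Phi_r(t,s)\,h(s)\,{\rm d}s$ with $h:=g/f_{\alpha,\beta}$.

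For the finiteness of $U(\beta t,r)$ I would split on $\beta$. If $\beta<c_1[T]/c_2[T]$ then $h\in{\mathcal L}^2_{f_{\alpha,\beta}}(0,\infty)$ by Proposition \ref{conv1}, the $\mathcal{B}_k^{(\alpha)}$ are bounded, $|L_k^{(\alpha)}(\beta t)|=O(k^{\alpha})$, and $\sum_k\mathcal{B}_k^{(\alpha)}L_k^{(\alpha)}(\beta t)r^k$ converges absolutely for $r\in(0,1)$; Abel's theorem already yields $U(\beta t,r)\to h(t)$. When $\beta\ge c_1[T]/c_2[T]$ one cannot argue term by term: the elementary bound $|L_k^{(\alpha)}(\beta s)|\le L_k^{(\alpha)}(-\beta s)$ only gives $\int_0^{\infty}|L_k^{(\alpha)}(\beta s)|g(s)\,{\rm d}s\le\mathbb{E}[L_k^{(\alpha)}(-\beta T)]$, whose generating function $(1-r)^{-\alpha-1}M_T(\beta r/(1-r))$ diverges for $r$ short of $1$, so Fubini in the crude form fails. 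The finiteness must instead come from the explicit kernel: using $I_\alpha(z)\sim e^{z}/\sqrt{2\pi z}$ and $I_\alpha(z)\sim (z/2)^{\alpha}/\Gamma(\alpha+1)$ as $z\to\infty$ and $z\to0^+$, one checks that for fixed $t,r$ the kernel obeys a Gaussian-in-$\sqrt s$ bound $\mathcal{P}_r^{(\alpha)}(\beta t,\beta s)\le C_{t,r}\,s^{-\alpha/2-1/4}\exp\!\big(-\tfrac{\beta r}{1-r}(\sqrt s-\sqrt t)^{2}\big)$ for large $s$ and stays $O(1)$ near $s=0$; since $g$ from \eqref{FPTpdf} is bounded, vanishes faster than any power as $s\downarrow 0$ and decays exponentially as $s\to\infty$, the integral $\int_0^{\infty}\mathcal{P}_r^{(\alpha)}(\beta t,\beta s)g(s)\,{\rm d}s$ is finite for every $r\in(0,1)$, and a careful passage to the limit inside the truncated-kernel integrals — using the summation-by-parts and oscillatory estimates on $L_k^{(\alpha)}$ that are the heart of Theorem 2 in \cite{MR4159245} — identifies $U(\beta t,r)$ with it.

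For the limit $r\uparrow1$ I would read off from the closed form that $\{\Phi_r(t,\cdot)\}$ is an approximate identity concentrating at $s=t$: the Gaussian factor kills the mass away from $s=t$ as $r\uparrow1$, while near $s=t$ the kernel integrates to $1$. Writing $U(\beta t,r)-h(t)=\int_0^{\infty}\Phi_r(t,s)\big(h(s)-h(t)\big)\,{\rm d}s$, splitting at $|\sqrt s-\sqrt t|>\delta$, and using the continuity of $h$ on $(0,\infty)$ on the inner part together with the Gaussian decay and the tail bounds on $g$ on the outer part, one obtains $U(\beta t,r)\to h(t)=g(S,t|y_0)/f_{\alpha,\beta}(t)$. \emph{The main obstacle} is the finiteness in the regime $\beta\ge c_1[T]/c_2[T]$ — equivalently, legitimising the interchange of $\sum_k$ and $\int_0^{\infty}$ above — since there the series is no longer absolutely convergent and one must extract the cancellation carried by the explicit Laguerre Poisson kernel exactly as in \cite{MR4159245}, the naive term-by-term bounds being useless.
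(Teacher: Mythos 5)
The paper itself offers no argument for this proposition---it defers entirely to Theorem 2 of \cite{MR4159245}---so your reconstruction through the Hille--Hardy (Laguerre--Poisson) kernel is the natural route and is in the spirit of the cited result. Your identification $\mathcal{B}_k^{(\alpha)}=\big[a_k^{(\alpha)}\big]^2\,\mathbb{E}\big[L_k^{(\alpha)}(\beta T)\big]$, the normalisation of the kernel against $f_{\alpha,\beta}$, and the approximate-identity argument for $r\uparrow 1$ are all correct, and in the regime $\beta\le c_1[T]/c_2[T]$ the Fubini step is legitimate (there $|\mathcal{B}_k^{(\alpha)}|\le \mathbb{E}[e^{\beta T/2}]<\infty$), so your kernel computation does prove both claims. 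The only slip in that regime is the appeal to Abel's theorem, which would require pointwise---not merely $\mathcal{L}^2_{f_{\alpha,\beta}}$---convergence of the Laguerre series at $t$; but your own approximate-identity argument makes that appeal superfluous.

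The genuine gap is exactly the step you flag and defer: for $\beta>c_1[T]/c_2[T]$ the plan cannot be completed as described, because the obstruction is not a failure of crude majorants but a failure of convergence of the series itself. Up to the factor $k!\,\Gamma(\alpha+1)/\Gamma(k+\alpha+1)$, the numbers $\mathbb{E}\big[L_k^{(\alpha)}(\beta T)\big]$ are the Taylor coefficients at $w=0$ of $F(w)=(1-w)^{-\alpha-1}M_T\big(-\beta w/(1-w)\big)$. Since $M_T$ has a branch point at $\lambda^{*}=a/(2b^2)=\frac{1}{2}\,c_1[T]/c_2[T]$, the function $F$ is singular at $w_0=-\lambda^{*}/(\beta-\lambda^{*})$, and $|w_0|<1$ precisely when $\beta>c_1[T]/c_2[T]$; the radius of convergence of $\sum_k \mathbb{E}[L_k^{(\alpha)}(\beta T)]\,w^k$ is then $|w_0|$, so $|\mathcal{B}_k^{(\alpha)}|$ grows geometrically like $|w_0|^{-k}$ up to powers of $k$. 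For fixed $t>0$, Fej\'er's asymptotics give $|L_k^{(\alpha)}(\beta t)|\asymp k^{\alpha/2-1/4}$ along a positive-density set of $k$, so the general term of $\sum_k \mathcal{B}_k^{(\alpha)}L_k^{(\alpha)}(\beta t)\,r^k$ does not tend to zero once $r>|w_0|$: no summation by parts or oscillation estimate can produce convergence there. The Poisson-kernel integral you write down is indeed finite for every $r\in(0,1)$, but it coincides with the series only where the interchange is valid; beyond $|w_0|$ the series is simply divergent, so ``finiteness from the explicit kernel'' cannot be obtained by passing to the limit in truncations. To close the argument you must either restrict the convergence claim (to $\beta\le c_1[T]/c_2[T]$, or to $r<|w_0|$) or reinterpret $U(\beta t,r)$ as the kernel (Abel-regularised) integral, i.e.\ as the analytic continuation in $r$ of the series from the disc where it converges; deferring to the oscillatory estimates behind Theorem 2 of \cite{MR4159245} does not bridge this.
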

\subsection{\label{sec:level4}
Choosing a different reference pdf: the log-normal density}
Since the gamma pdf is such that  $f_{\alpha,\beta}(0) \ne 0$ for $\alpha  \in (-1,0),$ differently from the FPT pdf for which  $g(0)=0,$  we might test  a different reference pdf to recover the polynomial  approximation of $g(t).$
A density with support $(0,+\infty)$ and behaving as the FPT pdf of a GBM is the log-normal one with parameters $\mu$ and $\sigma$   
\begin{equation}
\tilde{f}_{\mu,\sigma}(t) = \frac{1}{t\sqrt{2\pi\sigma^2}}e^{-\frac{(\ln(t)-\mu)^2}{2\sigma^2}}.
\end{equation}
To recover a polynomial approximation, we need to characterize the
family of orthogonal polynomials with respect to the measure $\nu({\rm d}t) = \tilde{f}_{\mu,\sigma}(t) \, {\rm d}t.$
Unlike the generalized Laguerre ones, 
these polynomials are not classically known and have been computed for $\mu=0$ and $\sigma=1$ in \cite{chaos} and for arbitrary $\mu$ and $\sigma$ in \cite{asmussen, zheng}
using a classic  procedure (see for example \cite[Th. 2.1.1]{szeg},    \cite[Section 4]{prov1}). Using
the monic polynomials  given in \cite{zheng}, the polynomial approximation results to be 
\begin{equation}
\label{approxLOG}
\hat{g}_n(t) = C \; \tilde{f}_{\mu,\sigma}(t) \;\sum_{i=0}^{n}\eta_i\;\pi_i(t),\;\;n\in\mathbb{N}
\end{equation}  
where $C$ is a suitable normalization constant and
\begin{equation}
\pi_i(t) = 
\sum_{j=0}^{i} (-1)^{i+j}e^{(i-j)\mu}e^{(i-\frac{1}{2})(i-j)}
\gbinom{i}{j}_{e^{\sigma^2}} \; t^j   
\end{equation}
with $\gbinom{n}{i}_q$  the $q$-Binomial coefficient 
\begin{equation}
\gbinom{n}{i}_q=\frac{(1-q^n)(1-q^{n-1})\dotsm(1-q^{n-i+1})}{(1-q^i)(1-q^{i-1})\dotsm(1-q)}.
\end{equation}
The main drawback of \eqref{approxLOG} is that the system $\{\pi_i\}_{i\geq0}$ is not complete in $\mathcal{L}^2_{\nu}(0,\infty)$  \cite[
Proposition 1.1]{asmussen}. 
Indeed the log-normal distribution is not fully characterized by its moments \cite{heyde} and $\hat{g}_n$ might  converge to a density different from $g$, but sharing the same moments as $g$ (see
\cite[Proposition 4.1]{chaos} for a non trivial example of a family of densities for which the convergence fails). Note that 
\eqref{approxLOG} fails to approximate even a log-normal pdf \cite[Fig 1.2]{asmussen}. 
\subsection{\label{sec:level4bis}
Choosing a different reference pdf: the Inverse Gaussian density}

Another possible choice for the reference density could be the Inverse Gaussian. In the special cases of the  GBM this choice would be clearly extremely convenient since we actually know that the FPT has Inverse Gaussian distribution.
In this case the choice may seem nearly cheating, but it is also the FPT density of the Brownian motion and it would make sense to use it  as reference density.
Unfortunately in \cite{nishii1996orthogonal} it is shown that
the usual method of differentiating the density does not lead to an orthogonal polynomial system, and starting
from the Laguerre polynomials leads to a system of orthogonal functions which is not complete.
The only way to get a complete system of polynomials is by using
the Gram–Schmidt orthogonalisation procedure, but the resulting polynomials are not easy to use (see also \cite{laub}).
In \cite{hassairi2004characterization} the authors propose a method
to derive the polynomials that involves the so-called bi-orthogonality property but they do not discuss whether this construction leads to a basis.

Therefore, in the following we have considered only the gamma pdf as reference density.

\section{\label{section4}
Computational issues }
In the software environment {\tt R}, the package {\tt PDQutils} contains a collection of tools for approximating pdf's via classical expansions involving moments and cumulants.
For the Laguerre polynomials, the {\tt PDQutils} routine  implements
the following choice of $\alpha$ and $\beta:$
\begin{equation}
\label{condsuaeb}
\alpha := \frac{c_1^2[T]}{c_2[T]}-1 \quad \text{and }\quad \beta:= \frac{c_1[T]}{c_2[T]}.
\end{equation}
In such a case, expansion \eqref{expansion2} simplifies as a straightforward computation shows that ${\mathcal B}_1^{(\alpha)}=
 {\mathcal B}_2^{(\alpha)}=0$ and the first two moments of $T$ are equal to  the first two moments of $f_{\alpha,\beta}(t),$ that is  
\begin{equation}
\label{mm}
\mathbb E[T] = \frac{\alpha+1}{\beta} 
 \quad \text{and }\quad 
\mathbb E[T^2] = \frac{(\alpha+1)(\alpha+2)}{\beta^2}.
\end{equation}
If the {\tt PDQutils} routine is used within an iterative procedure aiming to return the integer $n$ that allows a good approximation, such a procedure results computationally inefficient since the $n+1$-th approximation $\hat{g}_{n+1}(t)$ cannot be obtained by updating the $n$-th one $\hat{g}_{n}(t)$. 

Here, we propose a different approach that relies on nested products taking advantage of a different representation of the polynomial $p_n(t)$
in \eqref{expansion2}. Indeed, combining the coefficients of $\{L_k^{\alpha}(\beta t)\}$ with the same power of $t,$ the terms of the polynomial $p_n(t)$ in \eqref{expansion2} can be rearranged as follows 
\begin{equation}
\label{expansion3}
p_n(t) = \sum_{k=0}^n h_{n,k} \frac{(-\beta t)^k}{k!} 
\quad \text{with}
\quad 
h_{n,k} = \sum_{j=k}^n {\mathcal B}_j^{(\alpha)}
\binom{\alpha+j}{j-k},
\end{equation}
${\mathcal B}_0^{(\alpha)} = 1$ and
\begin{equation}
\label{hnk}
\binom{\alpha+j}{j-k} =
\left\{ \begin{array}{ll}
1, & j=k,\\
\frac{(\alpha+j)(\alpha+j-1)\cdots (\alpha+k+1)}{(j-k)!},
& j > k.
\end{array}\right.
\end{equation}
Therefore $p_n(t)$ is better evaluated using the recurrence relation
\begin{equation}
\label{recursion}
d_{n,i}(t)=h_{n,i-1}-\frac{\beta t}{i} d_{n,i+1}(t)  \quad
\text{for } i=n,n-1,\ldots,1
\end{equation}
with the initial condition
$d_{n,n+1}(t) = h_{n,n},$
since the last value gives $d_{n,1}(t)=p_n(t).$ 
Also the coefficients $\{{\mathcal B}_{k}^{(\alpha)}\}$ can be computed using a recursion formula, as the following proposition shows.
\begin{prop} For all $k \geq 1$ we have
\begin{equation}
{\mathcal B}_{k}^{(\alpha)} =
\sum_{j=1}^k \binom{k}{j} (-1)^{j+1} {\mathcal B}_{k-j}^{(\alpha)} + \frac{(-\beta)^k {\mathbb E}[T^k]}{(\alpha+k)_k}.
\label{recursionB}
\end{equation}
\end{prop}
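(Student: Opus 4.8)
The plan is to recognize \eqref{recursionB} as an instance of binomial inversion applied to the defining formula \eqref{expansion1}. Put $m_0 := 1$ and, for $j \geq 1$,
$$m_j := \frac{(-\beta)^j \, \mathbb{E}[T^j]}{(\alpha+j)_j},$$
so that, since $\mathbb{E}[T^0] = 1$ and $(\alpha+0)_0$ is the empty product $1$, the definition \eqref{expansion1} takes the compact form
$$\mathcal{B}_k^{(\alpha)} = \sum_{j=0}^k \binom{k}{j} m_j \qquad (k \geq 0),$$
the summand $j=0$ accounting for the leading ``$1$'' in \eqref{expansion1}.

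First I would invert this relation. By binomial inversion the last display is equivalent to
$$m_k = \sum_{j=0}^k (-1)^{k-j} \binom{k}{j} \mathcal{B}_j^{(\alpha)} \qquad (k \geq 0);$$
this is checked directly by substituting the previous display into the right-hand side, interchanging the two sums, and using the elementary identity $\sum_{j=\ell}^{k} (-1)^{k-j} \binom{k}{j}\binom{j}{\ell} = \delta_{k,\ell}$, which follows from $\binom{k}{j}\binom{j}{\ell} = \binom{k}{\ell}\binom{k-\ell}{j-\ell}$ together with the vanishing of $(1-1)^{k-\ell}$ for $k \neq \ell$.

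Next I would isolate the top term $j = k$ on the right-hand side of the inverted identity — it equals $\mathcal{B}_k^{(\alpha)}$ — to obtain
$$\mathcal{B}_k^{(\alpha)} = m_k - \sum_{j=0}^{k-1} (-1)^{k-j} \binom{k}{j} \mathcal{B}_j^{(\alpha)}.$$
Reindexing the remaining sum by $i := k-j$ (so $i$ runs from $1$ to $k$, with $\binom{k}{j} = \binom{k}{i}$ and $(-1)^{k-j} = (-1)^i$) turns it into $\sum_{i=1}^k (-1)^{i+1}\binom{k}{i}\mathcal{B}_{k-i}^{(\alpha)}$, and recalling $m_k = (-\beta)^k \mathbb{E}[T^k]/(\alpha+k)_k$ for $k \geq 1$ gives precisely \eqref{recursionB}.

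I do not expect any real obstacle: the whole argument is bookkeeping, and the only point needing a word of care is the boundary convention $m_0 = \mathcal{B}_0^{(\alpha)} = 1$ together with the empty-product reading of $(\alpha+j)_j$ at $j=0$, which is exactly what makes the compact form of \eqref{expansion1} valid at $k = 0$ and legitimises the inversion. Should one prefer to avoid invoking binomial inversion, the same identity can be obtained by induction on $k$: substitute \eqref{expansion1} for each $\mathcal{B}_{k-j}^{(\alpha)}$ on the right-hand side of \eqref{recursionB}, collect the coefficient of each $m_\ell$, and verify that the resulting alternating binomial sum collapses to $\binom{k}{\ell}$ — the same computation in disguise.
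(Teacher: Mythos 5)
Your proposal is correct, and it is essentially the paper's own argument in cleaner packaging: the paper likewise establishes the inverted relation $\frac{(-\beta)^k\,\mathbb{E}[T^k]}{(\alpha+k)_k}=\sum_{j=0}^k\binom{k}{j}(-1)^j\mathcal{B}_{k-j}^{(\alpha)}$ (its Eq.~\eqref{formula2}) by substituting the definition of $\mathcal{B}_{k-j}^{(\alpha)}$, interchanging sums, and using the same trinomial-revision and alternating-sum identities, then isolates the top term. Naming the step as binomial inversion and introducing $m_j$ is a tidier presentation, but the underlying computation is identical.
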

\begin{proof}
By plugging \eqref{expansion1} into ${\mathcal B}_{k-j}^{(\alpha)}$ for
$j=1,\ldots,k$ we get
\begin{equation}
\label{formula1}
\sum_{j=0}^k \binom{k}{j} (-1)^j {\mathcal B}_{k-j}^{(\alpha)} = 
{\mathcal B}_{k}^{(\alpha)} + \sum_{j=1}^k \binom{k}{j}
(-1)^j \bigg[ 1 + \sum_{i=1}^{k-j} \binom{k-j}{i} \frac{(-\beta)^i {\mathbb E}[T^i]}{(\alpha+i)_i} \bigg].
\end{equation}
Moreover, by expanding the inner sum in the rhs of  \eqref{formula1} and grouping with respect to the $j$-th moment ${\mathbb E}[T^j]$ we have
$$
\sum_{j=1}^k \binom{k}{j}
(-1)^j \bigg[ \sum_{i=1}^{k-j} \binom{k-j}{i} \frac{(-\beta)^i {\mathbb E}[T^i]}{(\alpha+i)_i} \bigg] = \sum_{j=1}^{k-1}
\frac{(-\beta)^j {\mathbb E}[T^j]}{(\alpha+j)_j} 
\bigg[ \sum_{i=1}^{k-j}
(-1)^i \binom{k}{i} \binom{k-i}{j} \bigg].
$$
Since 
$$\sum_{i=1}^{k-j}
(-1)^i \binom{k}{i} \binom{k-i}{j} =  \binom{k}{j} \sum_{i=1}^{k-j} \binom{k-j}{i}(-1)^i=-\binom{k}{j}$$
and 
$\sum_{j=0}^k \binom{k}{j}
(-1)^j=0$ which gives $\sum_{j=1}^k \binom{k}{j}(-1)^j=-1,$
from \eqref{formula1} we get
\begin{equation}
\sum_{j=0}^k \binom{k}{j} (-1)^j {\mathcal B}_{k-j}^{(\alpha)} = 
{\mathcal B}_{k}^{(\alpha)} -1 -\sum_{j=1}^{k-1}\binom{k}{j}
\frac{(-\beta)^j {\mathbb E}[T^j]}{(\alpha+j)_j}. 
\end{equation}
Plugging \eqref{expansion1}
into ${\mathcal B}_{k}^{(\alpha)}$ after some algebraic manipulation, we get
\begin{equation}
\frac{(-\beta)^k {\mathbb E}[T^k]}{(\alpha+k)_k} =
\sum_{j=0}^k \binom{k}{j} (-1)^j {\mathcal B}_{k-j}^{(\alpha)}.
\label{formula2}
\end{equation}
from which \eqref{recursionB} follows.
\end{proof}
For the GBM FPT random variable  also the moments $\{{\mathbb E}[T^k]\}$ of the FPT random variable can be computed through recursion using 
\eqref{recursionmom}.
For different stochastic processes, if cumulants $\{c_{k}[T]\}$ are known \cite{MR4159245,mathematics2021}, the following recursion might be 
implemented \cite{DiNardo}:
\begin{equation}
{\mathbb E}[T^{n+1}]= c_{n+1}[T] + \sum_{k=1}^{n}\binom{n}{k-1} c_k[T] 
\, {\mathbb E}\big[T^{n+1-k}\big].
\label{recmom}
\end{equation}
Therefore, the updating of   $\hat{g}_n(t)$ to $\hat{g}_{n+1}(t)$ might be performed by using the recursion \eqref{recursion}, setting  $h_{n+1,n+1}={\mathcal B}_{n+1}^{(\alpha)}$ and updating the coefficients $h_{n,i}$ to $$h_{n+1,i}=h_{n,i} + {\mathcal B}_{n+1}^{(\alpha)} \binom{\alpha+n+1}{n+1-i} \quad \text{for } i=0, \ldots, n.$$ 
It must be noted that we cannot in practice take $n$ arbitrarily large, due to
numerical errors incurred in calculating the  coefficients $\{{\mathcal B}_k^{(\alpha)}\}$. Obviously, this can be overcome by using infinite precision operations. Software tools like {\tt Mathematica} allow for arbitrarily large but finite precision. However this swiftly becomes prohibitively slow.
A way to push the iteration procedure up to the best order of numerical approximation relies on the subsequent normalization condition satisfied by the sequence $\{h_{n,i}\}.$ 
\begin{prop}
\label{stopcoeff}
For all $n \geq 0$ we have 
\begin{equation}
h_{n,0}+\sum_{i=1}^n \frac{(-1)^i}{i!} h_{n,i} (\alpha +i)_i = 1.
\label{cond1}
\end{equation}
\end{prop}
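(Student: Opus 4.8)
The plan is to recognize that condition \eqref{cond1} is nothing but the statement that $\hat g_n$ integrates to $1$, re-expressed in terms of the coefficients $h_{n,i}$ of the representation \eqref{expansion3}. First I would recall from the discussion following \eqref{expansion2} that, by the orthogonality of the generalized Laguerre polynomials, $\int_0^\infty \hat g_n(t)\,{\rm d}t = \int_0^\infty f_{\alpha,\beta}(t) p_n(t)\,{\rm d}t = 1$ for all $n \geq 0$; indeed each term ${\mathcal B}_k^{(\alpha)} L_k^{(\alpha)}(\beta t)$ with $k \geq 1$ is orthogonal to $L_0^{(\alpha)} \equiv 1$ against the weight, so only the constant term survives. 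Hence $\int_0^\infty f_{\alpha,\beta}(t)\, p_n(t)\, {\rm d}t = 1$.

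Next I would substitute the alternative form $p_n(t) = \sum_{k=0}^n h_{n,k} \frac{(-\beta t)^k}{k!}$ from \eqref{expansion3} into this integral and interchange sum and integral (a finite sum, so no convergence issue). This reduces the problem to computing the moments of the gamma reference density: using $f_{\alpha,\beta}(t) = \beta(\beta t)^\alpha e^{-\beta t}/\Gamma(\alpha+1)$, a change of variable $u = \beta t$ gives
\begin{equation}
\int_0^\infty t^k f_{\alpha,\beta}(t)\,{\rm d}t = \frac{1}{\beta^k}\,\frac{\Gamma(\alpha+1+k)}{\Gamma(\alpha+1)} = \frac{(\alpha+k)_k}{\beta^k},
\label{gammamoments}
\end{equation}
where $(\alpha+k)_k = (\alpha+1)(\alpha+2)\cdots(\alpha+k)$ is the rising factorial consistent with \eqref{hnk}. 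Plugging \eqref{gammamoments} into the integrated series, the factor $(-\beta)^k$ from $\frac{(-\beta t)^k}{k!}$ combines with $\beta^{-k}$ to leave $(-1)^k$, and we obtain
\begin{equation*}
1 = \int_0^\infty f_{\alpha,\beta}(t)\,p_n(t)\,{\rm d}t = \sum_{k=0}^n h_{n,k}\,\frac{(-1)^k}{k!}\,(\alpha+k)_k = h_{n,0} + \sum_{i=1}^n \frac{(-1)^i}{i!}\, h_{n,i}\,(\alpha+i)_i,
\end{equation*}
which is exactly \eqref{cond1}, using $h_{n,0}$ for the $k=0$ term since $(\alpha+0)_0 = 1$ and $0! = 1$.

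There is no serious obstacle here; the only point requiring a little care is matching conventions for the Pochhammer-type symbol $(\alpha+k)_k$ used in \eqref{hnk} with the ratio of Gamma functions appearing naturally from the moment computation, and confirming that the normalization of $f_{\alpha,\beta}$ in Proposition \ref{conv1} really does make $\int_0^\infty f_{\alpha,\beta} = 1$. Both are routine. I would write the proof in two or three lines, essentially displaying the chain of equalities above and citing the orthogonality relation $\int_0^\infty t^\alpha e^{-t} L_n^{(\alpha)}(t) L_m^{(\alpha)}(t)\,{\rm d}t = \frac{\Gamma(n+\alpha+1)}{n!}\delta_{n,m}$ already invoked in the proof of Proposition \ref{conv1} to justify $\int_0^\infty \hat g_n = 1$.
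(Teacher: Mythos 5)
Your proposal is correct and follows exactly the paper's own argument: invoke the normalization $\int_0^\infty \hat g_n(t)\,{\rm d}t=1$ (already established via Laguerre orthogonality after \eqref{expansion2}), substitute the representation \eqref{expansion3}, and evaluate the gamma moments $\int_0^\infty t^i f_{\alpha,\beta}(t)\,{\rm d}t = \Gamma(\alpha+1+i)/(\beta^i\Gamma(\alpha+1))$ so the powers of $\beta$ cancel. Nothing further is needed.
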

\begin{proof}
From the normalization condition of
$\hat{g}_n(t),$  
we have
$$\sum_{i=0}^n \frac{(-1)^i}{i!} \beta^i \, h_{n,i}  \int_0^{\infty} t^i f_{\alpha,\beta}(t)  {\rm d} t = 1 \quad \text{for all } \; n \geq 0.
$$
The result follows
by observing that the integrals in the lhs of \eqref{cond1} are the moments of the gamma pdf $f_{\alpha,\beta}(t),$ that is $$\int_0^{\infty} t^i f_{\alpha,\beta}(t)  {\rm d} t = \frac{\Gamma(\alpha+1+i)}{\beta^i \Gamma(\alpha+1)}.$$
\end{proof}
Condition \eqref{cond1} has been used to test the numerical stability of the computations, that is the iteration is ended as soon as this condition is no longer verified.

\section{\label{section5}
Numerical results }

In this section the Laguerre-Gamma polynomial
approximation  \eqref{expansion2} discussed previously is applied to the FPT pdf of the GBM. Since we have the closed form \eqref{FPTpdf}, we will
be able to compare directly the approximation with the true density.
We choose to employ $p_n(t)$ in  \eqref{expansion2} in the form \eqref{expansion3}, that gives the advantage of a more efficient implementation.
Moments are calculated using recursion \eqref{recursionmom}.

As previously stated, the main issue in the approximation is the choice of the best degree $n$ of the polynomial  approximation.  Different possibilities arise. For example, we have considered using a convergence-based stopping criterion. It consists in choosing the smallest $n$ such that, for a fixed tolerance $\epsilon>0$, we have
\begin{equation}\label{criterion1}
\Gnorm{\hat{g}_n(t)-\hat{g}_{n-1}(t)}_{\alpha,\beta}<\epsilon.
\end{equation}
However, this criterion is affected by numerical instability, because the condition \eqref{criterion1} is satisfied  for large values of $n$ but the corresponding  numerical amount of errors has already compromised the approximation. Consequently, we choose to employ the stopping criterion \eqref{cond1} as follows.  Set 
\begin{equation}
\hat{h}_n = h_{n,0}+\sum_{i=1}^{n}\frac{(-1)^i}{i!}h_{n,i}(\alpha+1)_i.
\end{equation}
Since $\hat{h}_n=1$ from Proposition  \ref{stopcoeff}, as stopping criterion  we choose the smallest $n$ such that 
	 
\begin{equation}\label{criterion2}
|\hat{h}_{n+1}-1|>\epsilon, \quad \mbox{ for a fixed } \  \epsilon>0.
\end{equation}
For the parameters $\alpha$ and $\beta,$ according to  \eqref{condsuaeb} we have
  	\begin{equation}
 	\beta=\frac{1}{\sigma^2}\left(\mu-\frac{\sigma^2}{2} \right)^2
 	\;\;\;\text{and}\;\;\;
 	\alpha=\frac{1}{\sigma^2}\log\left(\frac{S}{y_0}\right)
 	\left(\mu-\frac{\sigma^2}{2}\right)-1.
 	\end{equation}
Note that with these choices, we are in the limit case discussed in Remark \ref{limitcase} for what concerns the error.

To analyse the efficiency and the usefulness of the proposed method, in the following  we consider two instances:
\begin{enumerate}
    \item  the FPT pdf $g$ has  moments or cumulants known in a closed form;
    \item the knowledge of the FPT moments or cumulants is limited to first few orders.
\end{enumerate}
Indeed, for most of the stochastic processes the knowledge of the moments is limited to the mean and the variance or the expression of higher moments is cumbersome and not computationally convenient  to be employed in recovering $\hat{g}_n,$ see for example \cite{mathematics2021}.  In such cases the approximation might be carried out  by simulating  the trajectories of the process  through a suitable  Monte Carlo method and estimating the moments/cumulants of $T$.  In the last paragraph we suppose $g$ not known and use the approximation $\hat{g}_n$ to perform parameter estimations
using the maximum likelihood method. 
	 
\subsection{Laguerre-Gamma polynomial approximation:
known moments}
 	In the following we will test the accuracy and efficiency of the approximation by comparing $\hat{g}_n$ in \eqref{expansion2} with the true FPT pdf $g$ in \eqref{FPTpdf} through the following two approaches:
 	\begin{enumerate}[(a)]
 		\item a graphical approach by comparing their plots,
 		\item a quantitative approach by computing $|g(t)-\hat{g}_n(t)|$.
 	\end{enumerate}
 	We show and study three different cases (A,B and C in Fig. \ref{fig_2}), where we have fixed $S=10$ and $y_0=1$.
 	\begin{figure}[ht]
 		\centerline{\includegraphics[scale=0.67]{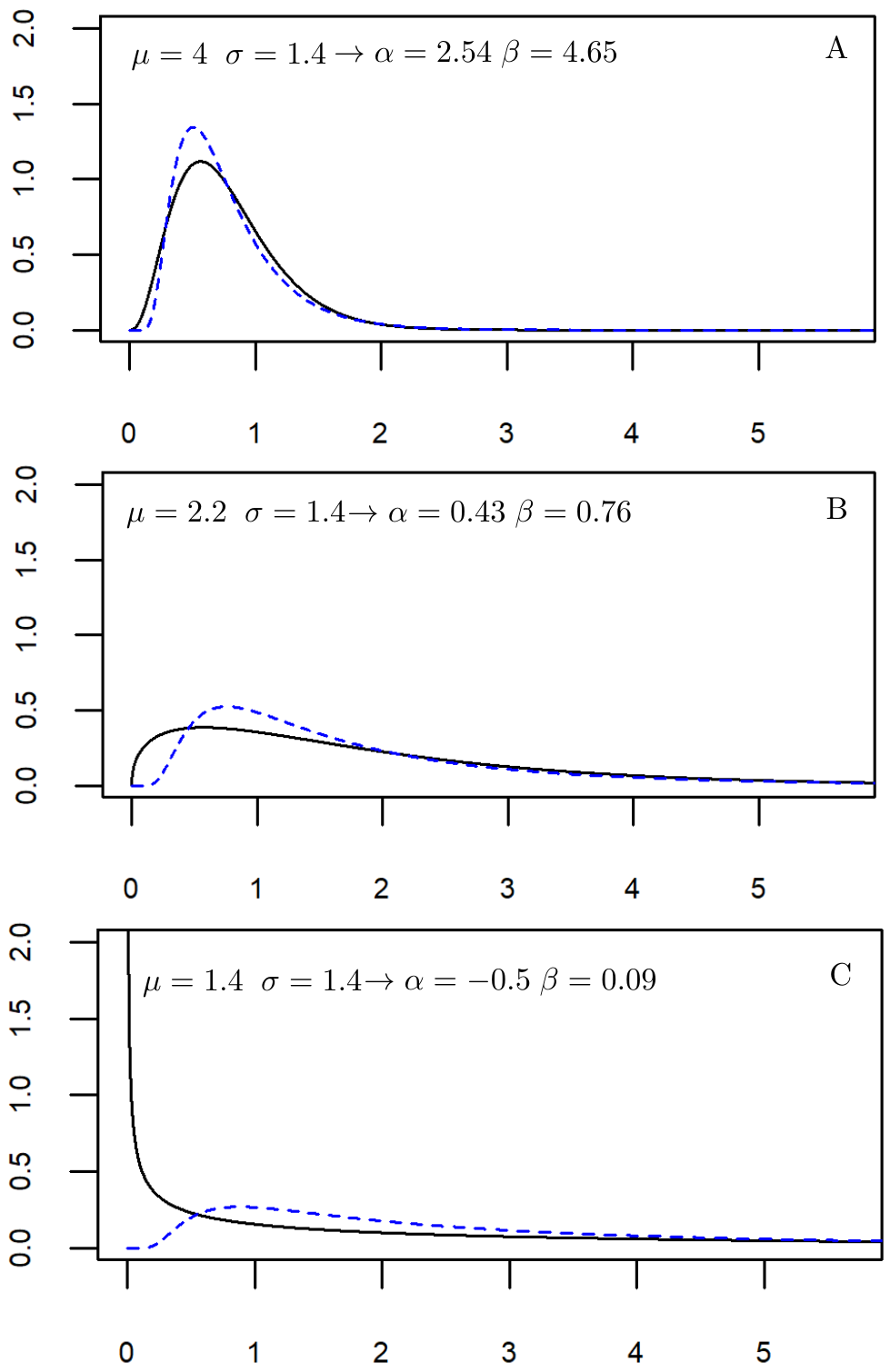}}
 		\caption{Plots of the FPT pdf of the GBM 
 		(blue dashed line) and the corresponding reference pdf $f_{\alpha,\beta}$ (black solid line), for $S=10$ and $y_0=1.$ The other values of the parameters are given in the legends. }
 		\label{fig_2}
 	\end{figure}
  	These examples show that, as the parameters change, the FPT pdf $g$ and the reference pdf $f_{\alpha,\beta}$ can be significantly different.
 	In Figs. \ref{Approx_1}, \ref{Approx_2} and \ref{Approx_3} we have plotted the polynomial approximation $\hat{g}_n(t)$ (black solid line) and the true density $g(t)$ (blue dashed line)  for the three mentioned cases using  four different orders of approximation  each time.
 	
 \begin{figure}[ht]
	\centerline{\includegraphics[scale=1]{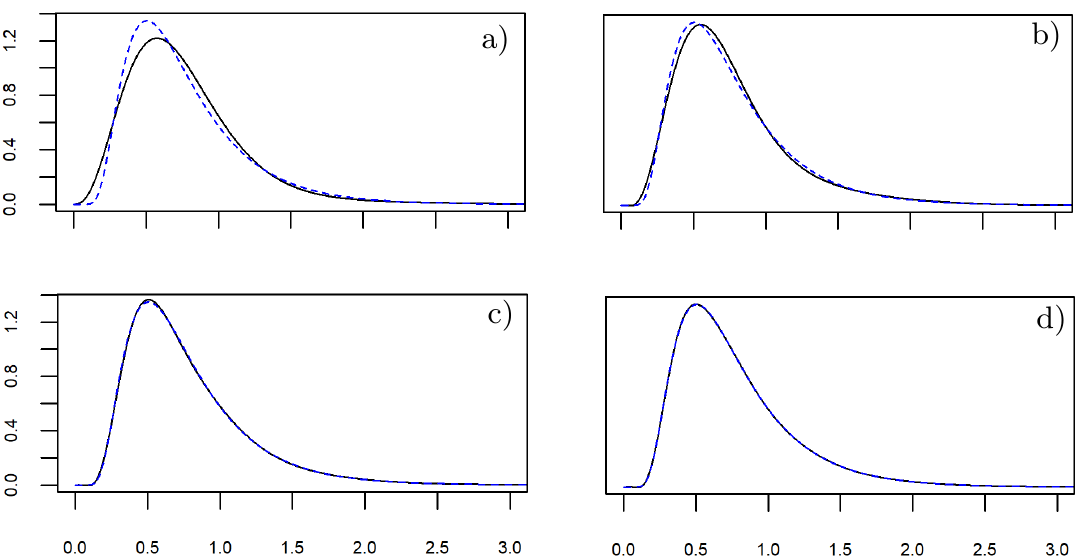}}
	\caption{Plots of the polynomial approximation $\hat{g}_n$ (black solid line) and of the true density $g$ (blue dashed line) in case A with $S=10$, $y_0=1$, $\mu=4$ and $\sigma=1.4$ for $n=3$ in {\it a)} $n=5$ in {\it b)} $n=16$ in {\it c)} 
	and $n=30$ in {\it d)}, where the last $n$ is the minimum integer s.t. condition \eqref{criterion2} is satisfied.}
	\label{Approx_1}
\end{figure}
\begin{figure}[ht]
	\centerline{\includegraphics[scale=1.1]{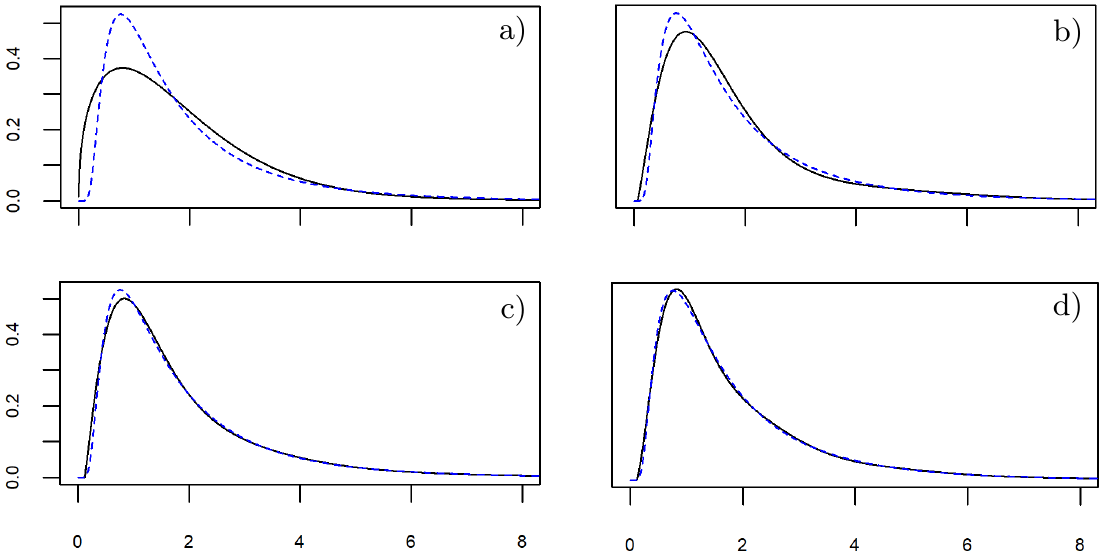}}
	\caption{Plots of the polynomial approximation $\hat{g}_n$ (black solid line) and of the true density $g$ (blue dashed line) in case B with $S=10$, $y_0=1$, $\mu=2.2$ and $\sigma=1.4$ for $n=3$ in {\it a)} $n=8$ in {\it b)} $n=16$ in {\it c)} 
	and $n=29$ in {\it d)}, where the last $n$ is the minimum integer s.t. condition \eqref{criterion2} is satisfied.}
	\label{Approx_2}
\end{figure}
\begin{figure}[ht]
	\centerline{\includegraphics[scale=1.1]{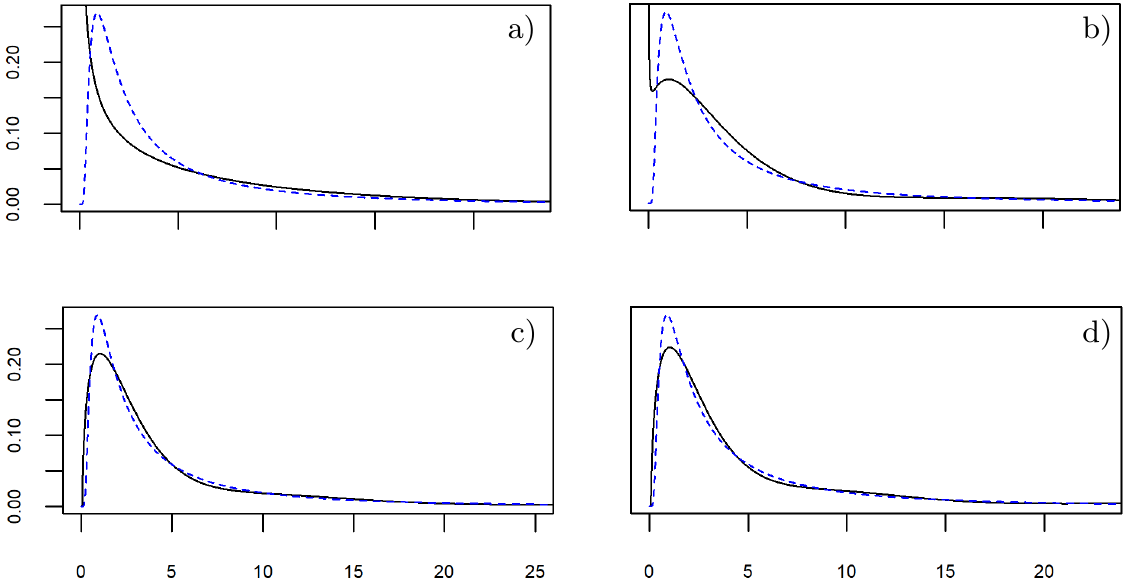}}
	\caption{Plots of the polynomial approximation $\hat{g}_n$ (black solid line) and of the true density $g$ (blue dashed line) in case C with $S=10$, $y_0=1$, $\mu=1.4$ and $\sigma=1.4$ for $n=3$ in {\it a)} $n=15$ in {\it b)} $n=25$ in {\it c)} 
	and $n=36$ in {\it d)}, where  the last $n$ is the minimum integer s.t. condition \eqref{criterion2} is satisfied.}
	\label{Approx_3}
\end{figure}
 	By comparing the three figures we observe that:
 	\begin{enumerate}[(a)]
 		\item in case A, where the reference pdf $f_{\alpha,\beta}$ and the FPT pdf $g$ have a similar behavior, a low degree $n$ guarantees a good approximation,
 		\item as $\alpha$ decreases, the reference pdf loses its typical bell shape and deviates further away from the FPT pdf,
 		\item in all the considered cases the goodness of the approximation increases with $n$ as long  as \eqref{cond1} is satisfied,
 		\item compared with the cases A and B, the case C proves to be the most difficult; in fact even for $n=36$ we do not match the peak of the FPT pdf $g$. Indeed, according to Remark \ref{limitcase}, in this case the order of convergence is $1/n$ as Theorem \ref{3.3} does not hold. So to get a good approximation we should consider high values of $n$ but this is prevented by the increasing numerical errors, namely condition \eqref{criterion2} is reached quickly. Moreover for $\alpha<0$, the mode of the reference pdf is not defined and thus the initial approximation is very far from $g$. In addition for the choice of parameters $\sigma^2$ and $\mu$  the stochastic component of the dynamics of the process is more dominant than the deterministic one, resulting in a flatter FPT pdf. 
 	\end{enumerate}    
 	
 	In Fig. \ref{Error} we have plotted the absolute error $|g(t)-\hat{g}_n(t)|$ between the true FPT pdf $g$ and the approximated $\hat{g}_n$ for the three cases and for the smallest $n$ such that condition \eqref{criterion2} is satisfied.
 	\begin{figure}
	\centerline{\includegraphics[scale=1]{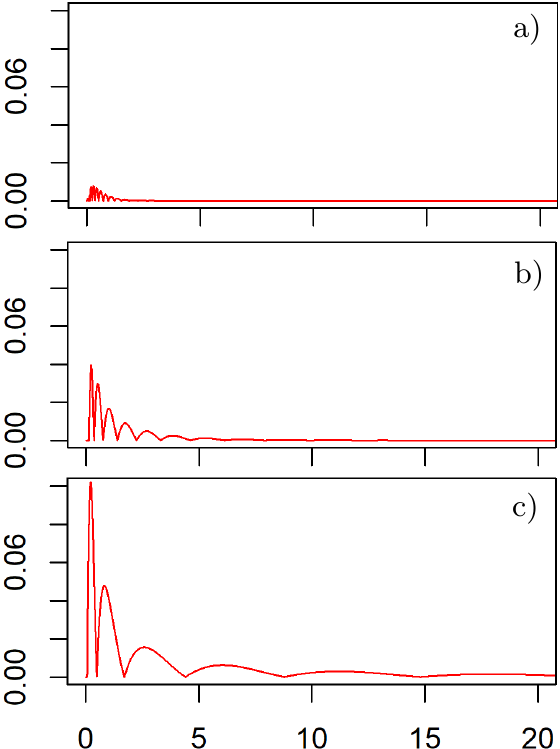}}
	\caption{Plots of the absolute error $|g(t)-\hat{g}_n(t)|$ between the true FPT pdf $g$ and the approximation $\hat{g}_n$ for the  smallest $n$ s.t. condition \eqref{criterion2} is satisfied in case A with $S=10$, $y=1$, $\mu=4$ and $\sigma=1.4$ in {\it a)}, case B with $S=10$, $y_0=1$, $\mu=2.2$ and $\sigma=1.4$ in {\it b)} and case C with $S=10$, $y=1$, $\mu=1.4$ and $\sigma=1.4$ in {\it c)}.}
	\label{Error}
\end{figure}

 \subsection{Laguerre-Gamma polynomial approximation:
unknown moments}
In the previous paragraph, the approximation relies on the knowledge of the moments of the underlying process, which were used in the coefficients $\{{\mathcal B}_{k}^{(\alpha)}\}$ defined in \eqref{expansion1} and in the approximation \eqref{expansion2}.
In the following we will consider a different setup. 

We sample $10^4$ FPTs of the GBM using the Milstein Method \cite{Higham} to simulate the trajectories of the process, thus obtaining a  random i.i.d. sample of FPTs $\{T_1,\dots,T_{N}\}$ of size $N=10^4$. Then the approximation \eqref{expansion2} is carried out using the recursion \eqref{recmom} and $\kappa$-statistics in place of cumulants $\{c_k(T)\}.$
The $k$-th $\kappa$-statistic is a symmetric function of the random sample whose expectation gives the $k$-th cumulant
$c_k(T).$ We have used the {\tt R}-package {\tt kStatistics} \cite{kstat} to recover $\kappa$-statistics for the simulated sample of FPTs. The numerical results 
 show that the approximations behave in essentially the same way as in the case of known moments and so we do not find useful to show the respective plots. 

Instead, in Fig. \ref{Error_s} we have plotted the absolute error $|g(t)-\hat{g}_n(t)|$ between the true FPT pdf $g$ and the approximated $\hat{g}_n$ for the three cases of Fig.\ref{fig_2} and for the smallest $n$ such that condition \eqref{criterion2} is satisfied. We note that the absolute errors are larger (in particular case A) but this strategy is much more general and can be applied to any process whose trajectories can be simulated or to FPT data with unknown underlying process.

\begin{figure}
	\centerline{\includegraphics[scale=1]{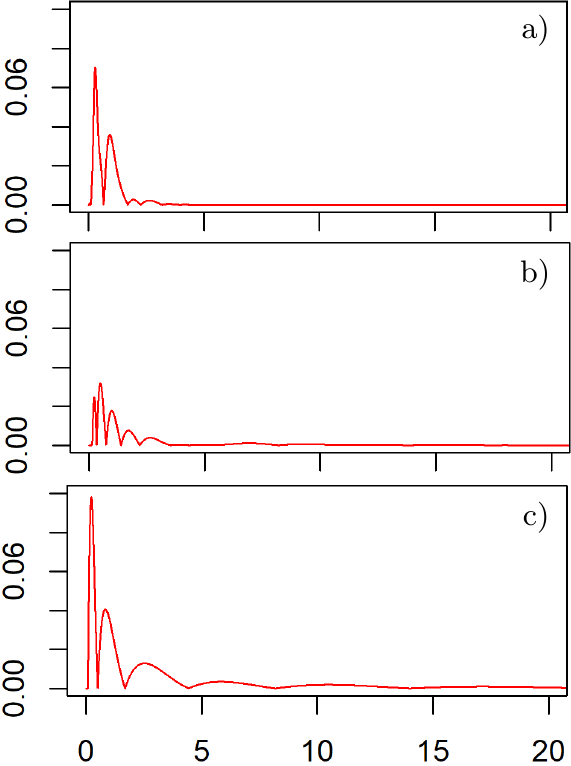}}
	\caption{Plots of the absolute error $|g(t)-\hat{g}_n(t)|$ between the true FPT pdf $g$ and the approximation $\hat{g}_n$ using sample moments for the  smallest $n$ s.t. condition \eqref{criterion2} is satisfied in case A with $S=10$, $y=1$, $\mu=4$ and $\sigma=1.4$ in {\it a)}, case B with $S=10$, $y=1$, $\mu=2.2$ and $\sigma=1.4$ in {\it b)} and case C with $S=10$, $y_0=1$, $\mu=1.4$ and $\sigma=1.4$ in {\it c)}.}
	\label{Error_s}
\end{figure}

 \subsection{Parameters estimation} 
 If the true pdf $g$ were known, the well known method of maximum likelihood estimation could have been applied to carry out parameter estimations of the process. In this paragraph we analyze the possibility to use the approximated density $\hat{g}_n$ with known moments, instead of the true density $g$ in the likelihood function. This approach would be particularly useful if the true density $g$ is not known, as usual happens, and can be applied since $\hat{g}$ is a pdf. Suppose to have  a  sample of FPTs $\{T_1,\dots,T_{N}\}.$ 
The maximum likelihood estimate of $(\mu,\sigma^2)$ is
\begin{equation}\label{mleeq}
    (\hat{\mu},\hat{\sigma})=\argmax_{(\mu,\sigma^2)\in\Theta}\ell_N(\mu,\sigma^2),
\end{equation}
with $\Theta=(-\infty,+\infty)\times(0,+\infty),$  $\ell_N(\mu,\sigma^2)=\ln{L_N(\mu,\sigma^2)}$ the log-likelihood function and 
$$L_N(\mu,\sigma^2)= \prod_{i=1}^{N} \hat{g}_n(T_i;\mu,\sigma^2).$$.
 
The maximization problem \eqref{mleeq} has been numerically solved using 
the function {\tt Optim} 
in the base {\tt{R}}-package {\tt Stats}. More specifically the maximum likelihood
estimates of the parameters have been  obtained through a global optimization algorithm known as {\sl Simulated Annealing}  \cite{anneal}. Simulated annealing is a stochastic global optimisation technique applicable to a wide range of discrete and continuous variable problems. It makes use of Markov Chain Monte Carlo samplers, to provide a means to escape local optima by allowing moves which worsen the objective function, with the aim of finding a global optimum. Technical details can be found in \cite{anneal}, a variant of which is the algorithm implemented in {\tt Optim}.  
For the same sample $\{T_1,\dots,T_{N}\}$ used in  the previous subsection  with $N=10^4$, Table \ref{mletab}  shows the maximum likelihood estimates of $\mu$ and $\sigma^2$ in\eqref{mleeq} for $n=34$ and for the cases considered in Fig.\ref{fig_2}.

\begin{table}[h]
	\begin{center}
		\begin{tabular}{| c | l | l | l | l |}
			\hline 
		 & $\mu$ & $\hat{\mu}$ & $\sigma^2$ & $\hat{\sigma}^2$ \\ \hline
			A & 4 & 3.893 & $(1.4)^2$ & $(1.362)^2$ \\ \hline 
			B & 2.2 & 2.152 & $(1.4)^2$ & $(1.37)^2$ \\ \hline
			C & 1.4 & 1.18 & $(1.4)^2$ & $(1.24)^2$ \\ \hline
		\end{tabular}
	\end{center}
	\caption{The true parameters $\mu$ and $\sigma^2$ and the
	maximum likelihood estimates $\hat{\mu}$ and $\hat{\sigma}^2$ for the cases A,B and C of Fig.\ref{fig_2}.}
	\label{mletab}
\end{table}
\begin{rem}
In the case of GBM process, a simpler way to estimate the parameters is using the method of moments, since the cumulants of the $IG(a,b)$ pdf have the simple expression \eqref{cumulants}. To estimate the cumulants we employ the $\kappa$-statistics $\kappa_i.$
Starting from the 
sample $\{T_1,\dots,T_N\}$ with $N=10^4$ used in the previous subsection, we compute the $\kappa$-statistics $\hat{\kappa_1}$ and $\hat{\kappa_2}$ using the {\tt R}-package {\tt kStatistics} \cite{kstat}. 
By simple computations, we set $\hat{\kappa_1}=b$ and $\hat{\kappa_2}=\frac{b^2}{a}\hat{\kappa_1}$ and
 we obtain  the following closed-form expressions for the estimations of $\mu$ and $\sigma^2$ 
 \begin{equation}\label{mmeq}
 \hat{\mu}=\frac{S_{y_0}}{\hat{\kappa_1}}\left(1+\frac{1}{2}\frac{\hat{\kappa_2}}{(\hat{\kappa_1})^2}S_{y_0}\right)\;\;\;\;\text{and}\;\;\;\;\hat{\sigma}^2=\frac{\hat{\kappa_2}}{(\hat{\kappa_1})^3}S_{y_0}, \end{equation}
 where $S_{y_0}=\ln(S)-\ln(y_0)>0$.
 Table \ref{mmtab} shows the numerical estimations of $\mu$ and $\sigma^2$ for the cases considered in Fig.\ref{fig_2}.

 \begin{table}[h]
	\begin{center}
		\begin{tabular}{| c | l | l | l | l |}
			\hline 
			 & $\mu$ & $\hat{\mu}$ & $\sigma^2$ & $\hat{\sigma}^2$ \\ \hline
			A & 4 & 3.899 & $(1.4)^2$ & $(1.39)^2$\\ \hline 
			B & 2.2 & 2.203 & $(1.4)^2$ & $(1.392)^2$ \\ \hline
			C & 1.4 & 1.39 & $(1.4)^2$ & $(1.39)^2$ \\ \hline
		\end{tabular}
	\end{center}
	\caption{The true parameters $\mu$ and $\sigma^2$ and the estimated parameters $\hat{\mu}$ and $\hat{\sigma}^2$ using Eqs. \eqref{mmeq} for the cases A,B and C of Fig.\ref{fig_2}}
	\label{mmtab}
\end{table}
Although the estimations in Table \ref{mmtab} are more accurate than those obtained in Table \ref{mletab}, we stress that in general the closed form expressions of the cumulants are not so manageable or even unavailable. In such a case, the maximum likelihood estimation using $k$-statistics is the only possible strategy.
\end{rem}  

\bibliographystyle{abbrv}
\bibliography{refs}

\end{document}